\theoremstyle{plain}
\newtheorem*{thm*}{Theorem}
\newtheorem{thm}{Theorem}
\Crefname{thm}{Theorem}{Theorems}
\newtheorem*{lem*}{Lemma}
\newtheorem{lem}[thm]{Lemma}
\Crefname{lem}{Lemma}{Lemmas}
\newtheorem*{claim*}{Claim}
\newtheorem{claim}[thm]{Claim}
\crefname{claim}{Claim}{Claims}
\Crefname{claim}{Claim}{Claims}
\newtheorem{prop}[thm]{Proposition}
\Crefname{prop}{Proposition}{Propositions}
\newtheorem{cor}[thm]{Corollary}
\crefname{cor}{Corollary}{Corollaries}
\crefname{conj}{Conjecture}{Conjectures}
\Crefname{qn}{Question}{Questions}
\newtheorem{obs}[thm]{Observation}
\Crefname{obs}{Observation}{Observations}
\Crefname{ex}{Example}{Examples}
\theoremstyle{definition}
\Crefname{prob}{Problem}{Problems}
\newtheorem{defn}[thm]{Definition}
\Crefname{defn}{Definition}{Definitions}
\theoremstyle{remark}
\renewenvironment{proof}[1][]{\begin{trivlist}
\item[\hspace{\labelsep}{\bf\noindent Proof#1.\/}] }{\qed\end{trivlist}}
\newcommand{\remove}[1]{}
\newcommand{\ceil}[1]{
    \lceil #1 \rceil
}
\newcommand{\floor}[1]{
    \lfloor #1 \rfloor
}
\newcommand{\eps}{\varepsilon}
\renewcommand{\P}{\mathcal{P}}
\newcommand{\cm}{connected matching}
\newcommand{\cms}{connected matchings}
\providecommand{\keywords}[1]
{
  \small	
  \textbf{\textit{Keywords:}} #1
}
\begin{document}

%\allowdisplaybreaks

\title{\vspace{-0.9cm} Three colour bipartite Ramsey number of cycles and paths}

\author{
	Matija Buci\'c\thanks{
	    Department of Mathematics, 
	    ETH, 
	    8092 Zurich;
	    e-mail: \texttt{matija.bucic}@\texttt{math.ethz.ch}.
	}
	\and
    Shoham Letzter\thanks{
        ETH Institute for Theoretical Studies,
        ETH,
        8092 Zurich;
        e-mail: \texttt{shoham.letzter}@\texttt{eth-its.ethz.ch}.
        Research supported by Dr.~Max
        R\"ossler, the Walter Haefner Foundation and by the ETH Zurich Foundation.
    }
    \and
	Benny Sudakov\thanks{
	    Department of Mathematics, 
	    ETH, 
	    8092 Zurich;
	    e-mail: \texttt{benjamin.sudakov}@\texttt{math.ethz.ch}.     
		Research supported in part by SNSF grant 200021-175573.
	}
}
\date{}

\maketitle

\begin{abstract}
    \setlength{\parskip}{\smallskipamount}
    \setlength{\parindent}{0pt}
    \noindent
    
    \vspace{-0.9cm} 
    The \textit{$k$-colour bipartite Ramsey number} of a bipartite graph $H$ is the least integer $n$ for which every $k$-edge-coloured complete bipartite graph $K_{n,n}$ contains a monochromatic copy of $H$. The study of bipartite Ramsey numbers was initiated, over 40 years ago, by Faudree and Schelp and, independently, by Gy\'arf\'as and Lehel, who determined the $2$-colour Ramsey number of paths. In this paper we determine asymptotically the $3$-colour bipartite Ramsey number of paths and (even) cycles.
    
    \keywords{bipartite Ramsey number, path Ramsey number, cycle Ramsey number, connected matchings.}
\end{abstract}

\section{Introduction}
	Ramsey theory refers to a large body of mathematical results, which roughly say that any sufficiently large structure is guaranteed to have a large well-organised substructure. For example, the celebrated theorem of Ramsey \cite{ramsey1929problem} says that for any fixed graph $H$, every $k$-edge-colouring of a sufficiently large complete graph contains a monochromatic copy of $H$. The \emph{$k$-colour Ramsey number of $H$}, denoted $r_k(H)$, is defined to be the smallest order of a complete graph satisfying this property.
	
	Despite significant attention paid to Ramsey problems, there are very few examples of families of graphs whose Ramsey numbers are known exactly, or even just asymptotically. An early example of an exact Ramsey result was obtained in 1967 by Gerencs\'er and Gy\'arf\'as \cite{ramsey2path}, who determined the $2$-colour Ramsey number of paths, showing that $r_2(P_n) = \floor{3n/2 - 1}$, where $P_n$ is the path on $n$ vertices. Faudree and Schelp \cite{faudree-schelp-cycles} and, independently, Rosta \cite{rosta-cycles-73} later determined the $2$-colour Ramsey number of cycles. The $3$-colour case was much more difficult and it took 25 more years until \L{}uczak \cite{luczak99-con-match} determined it asymptotically for odd cycles, showing that $r_3(C_n) = 4(1 +o(1))n$. In his paper \L{}uczak introduced a technique that uses the Szemer\'edi's regularity lemma to reduce problems about paths and cycles to problems about \emph{connected matchings}, which are matchings that are contained in a connected component. This technique has become fairly standard in the area, and, indeed, many of the results that we describe here as well as our own results make use of it. The $3$-colour Ramsey numbers for paths and even cycles were determined, asymptotically, by \L{}uczak and Figaj \cite{figaj-luczak}. These results were strengthened to exact result for long odd cycles by Kohayakawa, Simonovits and Skokan \cite{kohayakawa-simonovits-skokan}, for long paths by Gy\'arf\'as, Ruszink\'o, S\'ark\"{o}zy and Szemer\'edi \cite{ramsey3path} and for long even cycles by Benevides and Skokan \cite{benevides-skokan}. The odd cycles result was recently generalised to $k$ colours by Jenssen and Skokan \cite{jenssen-skokan} who proved that for every $k$ and sufficiently large \emph{odd} $n$, $r_k(C_n) = 2^{k-1}(n-1)+1$. Interestingly, this  does not hold for all $k$ and $n$, as was shown by Day and Johnson \cite{day-johnson}. Ramsey numbers of paths and even cycles are not understood as well for $k \ge 4$. The best known bounds (for paths as well as even cycles) are $(k-1-o(1))n\le r_k(P_n) \le (k-1/2 + o(1))n$. The lower bound is due to Yongqi, Yuansheng, Feng and Bingxi \cite{Yongqi}, and the upper bound is due to Knierim and Su \cite{knierim}.

	Over the years, many generalisations of Ramsey numbers have been considered (the survey \cite{conlon2015recent} contains many examples); one natural example is obtained by replacing the underlying complete graph by a complete bipartite graph. In particular, the \textit{$k$-colour bipartite Ramsey number} of a bipartite graph $H$, denoted $r^{\text{bip}}_k(H)$, is the least integer $N$ such that in any $k$-colouring of the complete bipartite graph $K_{N,N}$ there is a monochromatic copy of $H$.

	The study of bipartite Ramsey numbers was initiated in the early 70s by Faudree and Schelp \cite{faudree-schelp} and independently Gy\'arf\'as and Lehel \cite{gyarfas2path} who both considered the $2$-colours case for paths. They showed that
    $$
        r^{\text{bip}}_2(P_n)=
        \begin{cases}
            n-1 & \text {if } n \text{ is even},\\
            n & \text {if } n \text{ is odd.}
        \end{cases}
    $$
    The natural extension to cycles has been considered recently. Zhang and Sun \cite{zhang1} and Zhang, Sun and Wu \cite{zhang2} determine exact asymmetric $2$-colour Ramsey numbers for (even) cycles $r^{\text{bip}}_2(C_{2n},C_{2m})$ for $m \le 3$. As in the case of ordinary Ramsey numbers, few exact or asymptotic results for more colours are known, even for three colours. Joubert \cite{joubert} considers $k$-colour bipartite Ramsey number of even cycles, and obtains some bounds and exact results when all the cycles have length at most $8$. 
    
    Bipartite Ramsey numbers were also studied for the complete bipartite graphs, and the first to consider this were Beineke and Schwenk \cite{beineke} in $1976$. Similarly to the case of ordinary Ramsey numbers, the best known lower bound on $r^{\text{bip}}_k(K_{n,n})$, due to Hattingh and Henning \cite{hattingh} and the best known upper bound, due to Conlon \cite{conlon}, are still exponentially apart.
    
    In this paper we determine, asymptotically, the $3$-colour bipartite Ramsey number of paths. In fact, we determine the $3$-colour Ramsey number for the case of even cycles; the result for paths follows as a corollary.  

    \begin{thm}\label{thm:main}
        $r^{\text{bip}}_3(C_{2n}) = (3+o(1))n.$
    \end{thm}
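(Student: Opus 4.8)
The plan is to prove $r^{\text{bip}}_3(C_{2n})\ge 3n-2$ by an explicit colouring and $r^{\text{bip}}_3(C_{2n})\le(3+\eps)n$, for every fixed $\eps>0$ and all large $n$, by \L{}uczak's regularity-plus-connected-matchings method; the whole difficulty will concentrate in one combinatorial lemma about $3$-coloured $K_{t,t}$. For the lower bound, write $N=3(n-1)$ and split each side of $K_{N,N}$ into three parts of size $n-1$, $X=X_1\cup X_2\cup X_3$ and $Y=Y_1\cup Y_2\cup Y_3$; for a pair $(i,j)$ let the cell $Z_{ij}$ be the complete bipartite graph between $X_i$ and $Y_j$, so the nine cells partition $E(K_{N,N})$. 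Colour the cells by a $3\times 3$ Latin square: colour $c$ receives the diagonal $\{Z_{i,\sigma_c(i)}:i\in[3]\}$, where $\sigma_1,\sigma_2,\sigma_3$ are the permutations $\mathrm{id},(1\,2\,3),(1\,3\,2)$. Each colour class is then a vertex-disjoint union of three copies of $K_{n-1,n-1}$, whose longest cycle has length $2(n-1)<2n$, so there is no monochromatic $C_{2n}$. (This ``product'' colouring is optimal of its kind: if nine cells partition $E(K_{N,N})$ then $|X_i|\ge n$ and $|Y_j|\ge n$ cannot both occur, forcing $N\le 3n-3$.)

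\textbf{Upper bound: reduction to connected matchings.} Fix $\eps>0$, set $N=\ceil{(3+\eps)n}$, and take any $3$-edge-colouring of $K_{N,N}$. Apply the coloured bipartite regularity lemma to obtain clusters $X_1,\dots,X_t$ and $Y_1,\dots,Y_t$ of common size $L=(1-o(1))N/t$, and the reduced graph $R$: a bipartite graph on the $2t$ clusters, obtained from $K_{t,t}$ by deleting $o(t^2)$ irregular edges, with each remaining edge coloured by a densest colour of the corresponding regular pair. By the standard embedding lemma for regular pairs, a monochromatic connected matching of size $m$ in $R$ yields in $K_{N,N}$ a monochromatic cycle of every even length up to $(1-o(1))\cdot 2mL$; in particular it yields $C_{2n}$ once $m\ge(1+o(1))\,n/L=(1+o(1))\,t/(3+\eps)$. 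As $1/(3+\eps)<1/3$, it suffices to produce in $R$ a monochromatic connected matching of size $(1/3-\delta)t$ for a suitably small $\delta=\delta(\eps)>0$. The same machinery with paths in place of cycles (a connected matching of size $m$ also yields a monochromatic path on $(1-o(1))\,2mL$ vertices), together with the path analogue of the Latin-square construction, gives the corollary $r^{\text{bip}}_3(P_m)=(3/2+o(1))m$.

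\textbf{The key lemma --- the main obstacle.} It remains to show that \emph{every $3$-edge-colouring of $K_{t,t}$, even after deleting $o(t^2)$ edges, contains a monochromatic connected matching of size $(1/3-o(1))t$}; the constant $1/3$ and the extremal colouring are exactly those of the lower bound. I would prove this by a structural case analysis on the monochromatic components. If some colour, say red, has a component with $(1/3-o(1))t$ vertices on each side admitting a near-perfect matching, we are done; otherwise, by K\"onig's theorem, every red component has a small vertex cover, so red is ``concentrated'' on a small vertex set $S$. Deleting $S$ leaves a near-complete bipartite graph essentially $2$-coloured, and one finishes using the easier two-colour fact --- a $2$-edge-colouring of $K_{t',t'}$ has a monochromatic connected matching of size $(1/2-o(1))t'$ --- together with Gy\'arf\'as-type covering results for monochromatic components of coloured bipartite graphs, checking that each scenario (all three colours spread out; one or two concentrated; combinations thereof) reaches the target $t/3$. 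Carrying out this case analysis rigorously, and in particular handling the delicate regime where the three colour classes interact and no single one is either concentrated or matching-rich, is where essentially all the work of the theorem lies.
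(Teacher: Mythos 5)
Your high-level architecture is exactly the paper's: a lower bound of $3n-3$ from an explicit colouring (your Latin-square construction is a valid alternative to the paper's ``star'' colouring in \Cref{ex:main-example}, and both yield $3n-3$), and an upper bound via Szemer\'edi regularity plus \L{}uczak's connected-matching method, with the arithmetic in your reduction (needing a monochromatic \cm{} of size $(1/3-\delta)t$ in the reduced graph) worked out correctly. You also correctly identify that ``essentially all the work'' lies in the connected-matching lemma. However, you do not prove that lemma, and this is a genuine gap, not a routine verification: \Cref{thm:three-col-symm-cm} is the heart of the paper.

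Two concrete problems with your sketch of the key lemma. First, your dichotomy is not sound as stated: from ``every red component has a small vertex cover'' it does not follow that ``red is concentrated on a small vertex set $S$'', since there may be $\Omega(t)$ red components with pairwise-disjoint small covers whose union has size $\Omega(t)$; you would need a separate argument bounding the number of relevant components (the paper does this via the notion of \emph{big} components, of which there are at most three per colour because each must occupy $\ge k$ vertices per side). Second, the paper's actual route is quite different from the case analysis you gesture at: \Cref{thm:three-col-symm-cm} is an \emph{exact} result $r(k,k,k)=3k-2$ proved by a delicate induction on $k$ (with an intermediate asymmetric step $r(k,k+1,k+1)\le 3k$), removing carefully chosen cover vertices of big components to decrease all colours simultaneously. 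Moreover, because that induction works only for \emph{complete} bipartite hosts, the paper does not prove the ``deleting $o(t^2)$ edges'' version directly as you propose; instead \Cref{thm:three-col-symm-cm-almost-complete} \emph{reduces} the almost-complete case to the complete case by filling in missing edges inside virtual components so as to preserve cover numbers --- a step the authors highlight as new and which your outline skips entirely. Until you supply a complete proof of the connected-matching lemma (for almost-complete bipartite hosts, or for complete ones together with a reduction), the argument is incomplete.
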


    \begin{cor}\label{cor:main}
        $r^{\text{bip}}_3(P_{n}) = (3/2+o(1))n.$
    \end{cor}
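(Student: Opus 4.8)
The plan is to deduce Corollary~\ref{cor:main} from Theorem~\ref{thm:main}, the only additional ingredient being an elementary product construction for the lower bound. For the upper bound, note that $P_n \subseteq P_{2\lceil n/2\rceil} \subseteq C_{2\lceil n/2\rceil}$, since the cycle $C_{2\lceil n/2\rceil}$ has at least $n$ vertices; hence every monochromatic $C_{2\lceil n/2\rceil}$ in a $3$-coloured $K_{N,N}$ contains a monochromatic $P_n$, so $r^{\text{bip}}_3(P_n) \le r^{\text{bip}}_3\big(C_{2\lceil n/2\rceil}\big)$. Applying Theorem~\ref{thm:main} with $\lceil n/2\rceil$ in place of $n$, and using $\lceil n/2\rceil = (1/2+o(1))n$, yields $r^{\text{bip}}_3(P_n) \le (3+o(1))\lceil n/2\rceil = (3/2+o(1))n$.

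For the lower bound I would exhibit a $3$-colouring of $K_{N,N}$ with $N = 3\big(\lceil n/2\rceil-1\big) = (3/2-o(1))n$ and no monochromatic $P_n$. Partition the two sides as $X = X_0\cup X_1\cup X_2$ and $Y = Y_0\cup Y_1\cup Y_2$, each part of size $t := \lceil n/2\rceil-1$, and colour an edge between $X_i$ and $Y_j$ by the colour $k\in\{1,2,3\}$ with $k\equiv i+j \pmod 3$. For each fixed colour $k$ the relation $i+j\equiv k\pmod 3$ defines a bijection $i\mapsto j$ of $\mathbb{Z}/3$, so the colour-$k$ subgraph is a vertex-disjoint union of three copies of $K_{t,t}$. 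In particular every monochromatic connected subgraph lies inside one such $K_{t,t}$ and hence has at most $2t<n$ vertices, so there is no monochromatic $P_n$. This gives $r^{\text{bip}}_3(P_n)\ge N+1 = 3\lceil n/2\rceil-2 = (3/2-o(1))n$, and together with the upper bound this proves the corollary.

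There is essentially no further obstacle: the only step that needs checking is that the product colouring above has no monochromatic path on $n$ vertices, which is immediate once one observes that each colour class splits into three disjoint complete bipartite graphs $K_{t,t}$. (The same family of colourings, now with $t=n-1$, is also the natural lower-bound construction behind Theorem~\ref{thm:main} itself, since $K_{n-1,n-1}$ contains no $C_{2n}$.) All the genuine difficulty of the problem is concentrated in the upper bound of Theorem~\ref{thm:main} --- the regularity reduction to monochromatic connected matchings, together with the resulting statement that every $3$-edge-colouring of $K_{m,m}$ contains a monochromatic connected matching with $(1/3-o(1))m$ edges.
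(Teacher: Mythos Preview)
Your argument is correct. The upper bound is exactly what the paper intends: $P_n$ sits inside $C_{2\lceil n/2\rceil}$, so the path result is an immediate corollary of \Cref{thm:main}. For the lower bound, however, you use a different construction than the paper. The paper's \Cref{ex:main-example} partitions only the \emph{left} side into $A_1,A_2,A_3$ of sizes $a_i$ and colours every edge touching $A_i$ with colour $i$; the longest monochromatic path in colour $i$ then has $2a_i+1$ vertices, giving $r^{\text{bip}}_3(P_{2m})\ge 3m-2$. Your product colouring (partitioning both sides and colouring by $i+j\bmod 3$) instead forces each colour class to split into three disjoint $K_{t,t}$'s, bounding monochromatic paths by $2t$ vertices. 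Both give the same asymptotic lower bound; the paper's example is slightly simpler to describe and also underlies the lower bounds in the connected-matching results, while your construction has the pleasant feature that every colour class is isomorphic and already disconnected. Your parenthetical remark that this same product colouring gives the lower bound for \Cref{thm:main} is true, but note that the paper actually uses the one-sided \Cref{ex:main-example} there as well.
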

    
    The following example provides the lower bounds for the above results.
	
	\begin{restatable}{ex}{mainExample} \label{ex:main-example}
	    Given $N=a_1+a_2+a_3$, split the vertices of the left part of $K_{N,N}$ into sets $A_1,A_2,A_3$ of sizes $|A_i|=a_i.$ Colour any edge touching a vertex of $A_i$ in colour $i.$ 
    \end{restatable}

    \begin{figure}[ht]
        \caption{Example \ref{ex:main-example}.}
        \includegraphics[scale = .8]{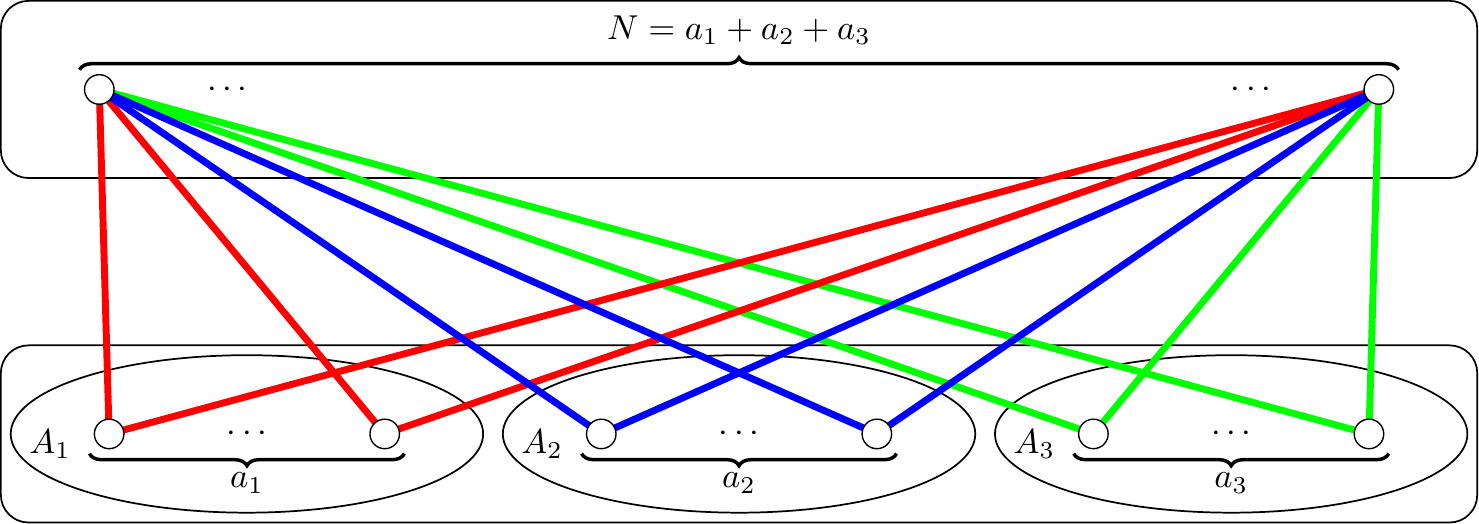}
        \label{fig:eg1}
	\end{figure}
	
    Note that in this example the longest path in colour $i$ has order $2a_i+1$, so there is no $C_{2(a_i+1)}$ or $P_{2(a_i+1)}$ in colour $i$, implying that $r^{\text{bip}}_3(C_{2n}),r^{\text{bip}}_3(P_{2n})\ge 3n-2.$
	
    \subsection{Organisation of the paper}	

        In our proof of \Cref{thm:main} we use \L{}uczak's method of converting problems about cycles and paths to problems about connected matchings. 
        The method requires us to find a monochromatic connected matching (i.e.\ a matching that is contained in a monochromatic component) in the so-called reduced graph (obtained by applying Szemer\'edi's regularity lemma), which is almost complete bipartite.
        To that end, we first obtain an exact bipartite Ramsey result for connected matchings (see \Cref{thm:three-col-symm-cm}); we do this in \Cref{sec:cm}. Many proofs that employ \L{}uczak's method work directly with an almost complete graph (or almost complete bipartite, as in our setting). However, as our proof of \Cref{thm:three-col-symm-cm} uses induction, we were unable to directly prove a variant of this theorem for almost complete bipartite graphs. Instead, we deduce the version for almost complete bipartite graphs from the theorem for complete bipartite graphs; see \Cref{sec:almost-complete}. This approach is to the best of our knowledge new and we believe it may be very useful, or at least simplify, other arguments which use the connected matchings method described above. In \Cref{sec:regularity} we introduce the preliminaries required for the application of \L{}uczak's method, including a multicolour version of the regularity lemma; we then use this method to complete the proof of \Cref{thm:main}. We conclude the paper in \Cref{sec:conc-remarks} with some remarks and open problems.

	    Throughout the paper we think of the two parts in the bipartition of a bipartite graph as the left and right hand sides. Given a bipartite graph $G$ we denote its left hand side by $L(G)$ and its right hand side by $R(G)$; when this is not likely to cause confusion, we omit $G$ from this notation. We call the three colours in a $3$-coloured graph red, green and blue. We denote the degree of a vertex $v$ in colour $C$ by $d_C(v)$ and the set of vertices joined by an edge of colour $C$ to $v$ as $N_C(v)$.
	
\section{Monochromatic \cms{} in \texorpdfstring{$K_{n,n}$}{Kn,n}}\label{sec:cm}
        
	Given an edge-coloured graph $H$, a \textit{$C$-coloured connected matching} is a matching that is contained in a connected component in the graph spanned by the edges of colour $C$; a \textit{$k$-\cm{}} is a \cm{} of size $k$. Let $r(k,l,m)$ denote the smallest integer $n$ such that for any $3$-colouring of $K_{n,n}$ there is a red $k$-\cm{}, a green $l$-\cm{} or a blue $m$-\cm{}. In the following theorem we determine $r(k,k,k)$.
	
	\begin{restatable}{thm}{thmThreeColSymmCM} \label{thm:three-col-symm-cm}
		$r(k,k,k) = 3k-2.$
	\end{restatable}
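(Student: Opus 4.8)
Taking $a_1=a_2=a_3=k-1$ in \Cref{ex:main-example} produces a $3$-colouring of $K_{3k-3,3k-3}$ in which, as observed right after the example, no colour class contains a connected matching of size $k$. Hence $r(k,k,k)\ge 3k-2$.

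\textbf{Upper bound.} It is convenient to prove the more general bound $r(k,l,m)\le k+l+m-2$ for all $k,l,m\ge 1$; together with the lower bound coming from \Cref{ex:main-example} with $a_1=k-1,\ a_2=l-1,\ a_3=m-1$, this in fact gives $r(k,l,m)=k+l+m-2$, and in particular \Cref{thm:three-col-symm-cm}. I would argue by induction on $k+l+m$. When $\min(k,l,m)=1$, say $m=1$, a blue $1$-\cm{} is just a blue edge, so either there is nothing to prove or the colouring uses only two colours and the statement reduces to the two‑colour bipartite Ramsey number of connected matchings, $r_2(k,l)=k+l-1$, which is easily checked directly; this also covers the base cases. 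So assume $k\ge l\ge m\ge 2$ and suppose, for contradiction, that $K_{N,N}$ with $N=k+l+m-2$ is $3$-coloured with no red $k$-\cm{}, no green $l$-\cm{} and no blue $m$-\cm{}.

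\textbf{The reduction step.} Suppose some vertex $x$, say $x\in L$, has $d_{\text{red}}(x)\ge l+m-1$. Choose any $y\in N_{\text{red}}(x)$ and delete $x$ and $y$; the remaining graph is $K_{N-1,N-1}$ with $N-1=(k-1)+l+m-2=r(k-1,l,m)$, so by the inductive hypothesis it contains a red $(k-1)$-\cm{}, a green $l$-\cm{}, or a blue $m$-\cm{}. The last two would already contradict our assumption on $K_{N,N}$, so there is a red $(k-1)$-\cm{} $M'$, contained in a red component $C'$ with $|R(C')|\ge k-1$. Since $d_{\text{red}}(x)\ge l+m-1=N-(k-1)$, $x$ has a red neighbour inside $R(C')$, and then $M'\cup\{xy\}$ is a red connected matching of size $k$ --- a contradiction. (A boundary case, where $d_{\text{red}}(x)$ equals $l+m-1$ exactly and $R(C')$ is disjoint from $N_{\text{red}}(x)$, is handled by the same idea applied to $y$, or by choosing the red edge $xy$ more carefully.) By the symmetry of this argument in the three colours and in the two sides, we obtain a contradiction whenever some vertex has red-degree at least $l+m-1$, green-degree at least $k+m-1$, or blue-degree at least $k+l-1$.

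\textbf{The balanced case (the main obstacle).} It remains to treat colourings in which every vertex satisfies $d_{\text{red}}\le l+m-2$, $d_{\text{green}}\le k+m-2$ and $d_{\text{blue}}\le k+l-2$; here the simple edge-peeling no longer applies and the bulk of the work lies. I would take a red connected matching $M$ of maximum size: applying the inductive hypothesis to a $K_{N-1,N-1}$ subgraph (which must contain a red $(k-1)$-\cm{}, since green and blue connected matchings have sizes at most $l-1$ and $m-1$) shows $|M|=k-1$; let $A$ be its red component. Maximality of $M$ forces every red edge of $A$ to meet $V(M)$, so, writing $L',R'$ for the $l+m-1$ vertices of $L,R$ outside $M$, every red edge between $L'$ and $R'$ lies in a red component other than $A$. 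If there were no such edges, then $K_{N,N}[L',R']$ would be a $2$-coloured $K_{l+m-1,l+m-1}$, and since $r(l,m,1)=l+m-1$ by induction it would contain a green $l$-\cm{} or a blue $m$-\cm{} --- a contradiction. The delicate point is that the stray red edges between $L'$ and $R'$ cannot sabotage this conclusion: using the degree bounds together with the fact that each such stray red component has matching number at most $k-1$ --- and bounding how many red components can simultaneously carry near-maximum matchings (which, if several did, would force a large monochromatic structure in the remaining colours anyway) --- one should still extract a green $l$-\cm{} or a blue $m$-\cm{} from this region, closing the induction. Controlling this interaction between the various monochromatic components is, I expect, the technical heart of the proof.
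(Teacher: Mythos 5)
Your upper-bound strategy rests on the claim $r(k,l,m)\le k+l+m-2$ for all $k,l,m\ge 1$, but this claim is \emph{false}, and the paper itself disproves it: \Cref{lem:lower-bound} shows $r(k+1,l+1,l+1) > k+2l+t$ with $t=\min\{k-l,2l-k\}$, which exceeds $(k+1)+2(l+1)-2 = k+2l+1$ whenever $\tfrac{k}{2}<l<k$ (so $t\ge 1$); for instance $r(5,4,4)\ge 12 > 11$. More generally, \Cref{thm:k-k-l-all} gives $r(k,l,l)=2k+l-2>k+2l-2$ for $\tfrac{2k}{3}<l<k$. Your formula happens to agree with the truth in the symmetric case $k=l=m$, but the induction on $k+l+m$ necessarily visits asymmetric triples where it is wrong, so the induction cannot close.

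Concretely, the break occurs exactly where you expect it: the ``balanced case''. There you reduce to a $K_{l+m-1,l+m-1}$ on $L'\cup R'$, observe that with no stray red edges one could invoke a two-colour bound, and then write that the stray red components ``cannot sabotage this conclusion''. They can, and do: the extremal colourings of \Cref{lem:lower-bound} live entirely in the regime where every vertex satisfies all of your degree caps $d_{\text{red}}\le l+m-2$, $d_{\text{green}}\le k+m-2$, $d_{\text{blue}}\le k+l-2$ (they sum to $2N-2\ge N$, so the caps are not restrictive at all), and in those colourings the red components spilling into $L'\times R'$ block both a green $l$-\cm{} and a blue $m$-\cm{}. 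No amount of ``bounding how many red components can carry near-maximum matchings'' will rescue a false statement. Separately, your reduction step has a real gap: $|N_{\text{red}}(x)\setminus\{y\}| + |R(C')| \ge N-1$ gives only equality, not a forced intersection, and the parenthetical ``handled by the same idea applied to $y$, or by choosing $xy$ more carefully'' is not an argument.

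The paper avoids this trap by never attempting the full asymmetric formula. It inducts on $k$ for the \emph{symmetric} quantity $r(k,k,k)$, routing through only the single auxiliary statement $r(k,k+1,k+1)\le 3k$ (\Cref{prop:induction-asym-three-paths}), and the inductive step is structural rather than degree-peeling: one classifies the at most three ``big'' components per colour by whether their minimum covers live on the left or on the right (properties \ref{itm:b1}--\ref{itm:b3} and \ref{itm:s1}--\ref{itm:s3}), and then removes one \emph{cover vertex} per big component, two or three vertices from each side, so that \Cref{obs:konig} kills every big component at once. The argument that only two removals per side suffice hinges on \Cref{claim:three-covers}, which rules out three same-side covers of differently coloured big components. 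You would need to find and prove an analogue of this structural claim to make any headway; the degree-threshold dichotomy you propose does not isolate the hard configurations.
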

    \begin{proof}
        Note that \Cref{ex:main-example} implies that $r(k,k,k) \ge 3k-2$. We prove the upper bound by induction on $k$; in other words, we prove by induction that every $3$-colouring of $K_{n,n}$, where $n=3k-2$, contains a monochromatic $k$-\cm.
        
        For the basis we note that if $k=1$ this statement is equivalent to the existence of an edge, which is true as $n = 3k-2 = 1$. We also show that when $k = 2$ and $n = 3k-2 = 4$ there is a monochromatic $2$-\cm{} (note that having a monochromatic $2$-\cm{} is equivalent to having a monochromatic $P_4$ so this will imply that $r^{\text{bip}}_3(P_{4}) = 4.$) Towards a contradiction, suppose that there is a $3$-colouring of $K_{4,4}$ without monochromatic $2$-\cms. Since every vertex has degree $4$, it has degree at least $2$ in some colour, and since there are four vertices on the left, there are two vertices, $u_1$ and $u_2$, with degree at least $2$ in some colour, say red. If the red neighbourhood of $u_1$ and $u_2$ intersect, then there is a red $2$-\cm. Otherwise, their red neighbourhoods cover the right hand side, so the existence of any other red edge not incident with $u_1$ or $u_2$ would imply the existence of a red $2$-\cm{}. Hence, we may assume that the two remaining vertices on the left, $u_3$ and $u_4$, have red degree $0$, and all vertices on the right have red degree $1$. An analogous argument implies the existence of two vertices on the right, $v_1$ and $v_2$, whose degree in a colour other than red, say blue, is $0$. It follows that all edges between $\{u_3, u_4\}$ and $\{v_1, v_2\}$ are green; in particular, there is a green $2$-\cm.
        
        We now assume $r(k,k,k) \le 3k-2$, where $k \ge 2$, and our aim is to prove that $r(k+1, k+1, k+1) \le 3k+1$. In order to prove the latter statement, we first prove that $r(k,k+1,k+1) \le 3k$, and then use this inequality to prove the desired statement. 

        The following observation follows easily from K\"onig's theorem \cite{konig}, which states that in bipartite graphs the size of a maximum matching is the same as the size of a minimum cover; we shall use this observation throughout the proof of \Cref{thm:three-col-symm-cm}.
        
        \begin{obs} \label{obs:konig}
            Let $H$ be a connected bipartite graph whose maximum matchings have size $l$. Let $v$ be a vertex that is contained in some minimum cover of $H$. Then a maximum matching in $H \setminus \{v\}$ has size $l-1$.
        \end{obs}
        
        We now introduce some notation that we will need for the proofs of the two statements: $r(k,k+1,k+1) \le 3k$ and $r(k+1,k+1,k+1) \le 3k+1$. In each of these proofs we assume, for the sake of contradiction, that there is a colouring of $K_{n,n}$ (where $n=3k$ and $n=3k+1$ respectively) without red \cms{} of size $k$ and $k+1$, respectively, and without blue or green \cms{} of size $k+1$.
            
        We call a monochromatic component \emph{big} if it contains a matching of size $k$. As the number of vertices in each side is at most $3k+1$ and $k \ge 2$, there are at most three big components in each colour. 
        We will use the following properties of big components.
        \begin{enumerate}[label=\textbf{B\arabic{*}}., ref=(\textbf{B\arabic{*}})]
            \item \label{itm:b1} Every big component has a minimum cover of size $k$,
            \item \label{itm:b2} every big component has at least $k$ vertices in each side,
            \item \label{itm:b3} if there is a cover of a big component $C$ which is contained in one of the sides, then there are no other vertices of $C$ in that side. 
        \end{enumerate}
        Property \ref{itm:b1} follows by K\"onig's theorem and the assumption that there are no monochromatic \cms{} of size $k+1.$ Property \ref{itm:b2} follows as big component contains a matching of size $k.$ Property \ref{itm:b3} follows as if there were vertices other than the ones in the cover, on the same side, their edges (which exist, as the component is connected) would not be covered by this cover.
        
        Given a monochromatic component $C$, we call a vertex $v$ a \emph{cover vertex} of $C$ if there is a minimum cover of $C$ that contains $v$. We say that a big component is of type $L$ if it contains a cover vertex in the left side, and we say that it is of type $R$ if it contains a cover vertex in the right. We note that a component can be of both types, in which case we say that it is \emph{unspecified}, or it can be of only one type, in which case we call it \emph{specified}. 
            
        We will use the following simple properties of specified and unspecified components.
        \begin{enumerate}[label=\textbf{S\arabic{*}}., ref=(\textbf{S\arabic{*}})]
            \item \label{itm:s1} Every specified component has exactly $k$ vertices in one side, and these constitute a minimum cover,
            \item \label{itm:s2} given a big component with exactly $k$ vertices in each side, each of its vertices is a cover vertex, hence the component is unspecified,
            \item \label{itm:s3} every specified component has at least $k+1$ vertices in one of the sides.
        \end{enumerate}
        To see why property \ref{itm:s1} holds, consider a specified component $C$, say of type $L$, so all cover vertices are in $L$. Then by \ref{itm:b1} there is a cover of size $k$ in $L$, hence property \ref{itm:s1} follows from \ref{itm:b3}. For property \ref{itm:s2}, note that each side of the component is a minimum cover, by \ref{itm:b1}, which implies that every vertex in the component is a cover vertex. Property \ref{itm:s3} follows directly from \ref{itm:s1} and \ref{itm:s2}.
        
        \begin{prop}\label{prop:induction-asym-three-paths}
            $r(k,k+1,k+1) \le 3k.$ 
        \end{prop}
        
        \begin{proof}
            We assume the opposite, i.e.\ that there is a $3$-colouring of $K_{n,n}$ without a red $k$-\cm{} or a blue or green $(k+1)$-\cm{}, where $n = 3k$. In particular, there are no big red components. We will need the following claim.
                        
            \begin{claim}\label{claim:three-covers} 
                If there are minimum covers of three distinct big components, all contained in the same side, then the three corresponding components have the same colour.
            \end{claim}
            
            \begin{proof}
                Assume the opposite; then, without loss of generality, there are two big blue components $B_1, B_2$ and a big green component $G_1$, all of which have a cover of size $k$ in $R$ (recall that $\{L, R\}$ is the bipartition of our complete bipartite graph). In particular, $|B_1 \cap R|=|B_2 \cap R|=|G_1 \cap R|=k$, by \ref{itm:b3}. 
                
                We note that every vertex $v \in L$ has blue degree at most $k$. Indeed, $v$ sends blue edges to at most one of the sets $B_1 \cap R$, $B_2 \cap R$ and $R \setminus (B_1 \cup B_2)$, all of which have size $k$.
                
                Let $U$ be the set of vertices $v \in L$ that have green degree at least $k+1$. Note that by \ref{itm:b3} the vertices in $U$ send green edges only to $R \setminus G_1$. As $|R \setminus G_1|=2k$, every two vertices in $U$ have a common green neighbour, so they all belong to the same green component $G_2$. Hence, if $|U| \ge k+1$, then by a greedy argument $G_2$ contains a green matching of size $k+1$, so there is a green $(k+1)$-\cm{}, a contradiction. So, $|U| \le k$.
                Let $W$ be the set of vertices in $L$ with red degree at least $k$. 
                By the above arguments, the at least $2k$ in $L \setminus U$ have both blue and green degree at most $k,$ so they have red degree at least $k$, meaning $|W| \ge 2k$.
                Then, since no red component contains a matching of size $k$ (by our initial assumption), a greedy argument shows that every red component contains at most $k-1$ vertices from $W$. Hence, $W$ consists of vertices from at least three different red components, each of which has at least $k$ vertices in $R$, because each vertex in $W$ has red degree at least $k$. Since $|R| = 3k$, this implies that the vertices of $R$ belong to exactly three red components, each of which has exactly $k$ vertices in $R$. It follows that all vertices in $L$ have red degree at most $k.$ Furthermore, since each of the three red components has at most $k-1$ vertices in $W$, we find that $|W| \le 3(k-1),$ so $L \setminus W$ is non-empty. Let $u \in L \setminus W$; so $u$ has red degree strictly smaller than $k$.
                
                Since both red and blue degrees of every vertex in $L$ are at most $k$, we conclude that the green degree of every vertex in $L$ is at least $k$. Furthermore, $u$ has green degree at least $k+1,$ so as $|G_1 \cap R| = k$ we have $u \notin G_1$. Since all vertices of $L$ have green degree at least $k,$ any $w \in L \setminus G_1$ has a common green neighbour with $u$, so in particular belongs to the same green component as $u,$ denoted by $G_2$. Note that this implies that $G_1$ and $G_2$ cover $L.$ Moreover, if $|G_2 \cap L| \ge k+1$ a greedy argument shows that $G_2$ contains a matching of size $k+1$, by picking, one by one, an unused green neighbour of a vertex in $G_2 \cap L$, letting $u$ be considered last. This is a contradiction, thus $|G_2 \cap L| \le k$ so $|G_1 \cap L| \ge 2k$.
                
                Denote $S_L = G_1 \cap L$ and $S_R = R \setminus G_1$. Then $|S_R|, |S_L| \ge 2k$, and there are no green edges between the two sets. Since every vertex in $S_L$ has blue degree at most $k$, it follows that every vertex in $S_L$ sends at least $k$ red edges into $S_R$. This implies vertices of $S_L$ belong to at most two different red components. Therefore, one of them contains at least $|S_L|/2 \ge k$ vertices from $S_L$, each of which has red degree at least $k$, which implies the existence of a red $k$-\cm{}, a contradiction.
            \end{proof}
            
            By removing any four vertices, two from each side, we obtain a $3$-coloured $K_{3k-2,3k-2}$, so by the inductive assumption we can find a monochromatic $k$-\cm{} and in particular a big component. 
            Let $C$ be a big monochromatic component and let $v$ be a cover vertex of $C$. Note that by \Cref{obs:konig}, the size of a maximum matching in $C \setminus \{v\}$ is smaller than the size of a maximum matching in $C$, hence $C \setminus \{v\}$ is not big. If we remove a vertex from the graph which is a cover vertex of a big monochromatic component $C$, we say that we removed $C$. Our goal is to show that we can remove at most two vertices from each side, in such a way that we remove all big components, thus reaching a contradiction and proving \Cref{prop:induction-asym-three-paths}.
            
            If there is a colour, say blue, with three big components, as there are $3k$ vertices on each side and each big component has at least $k$ vertices on both sides \ref{itm:b2} then each blue component has exactly $k$ vertices on each side. This implies, by \ref{itm:s2}, that every vertex is a cover vertex of its blue component and they are all unspecified. By \Cref{claim:three-covers} no minimum cover of  a big green component is contained in one side. This implies, by \ref{itm:s1}, that all green components are also unspecified. Furthermore, there are at most two big green components, since if there were three, by the same argument as for blue above, every component would have exactly $k$ vertices in each side, which implies that there is a minimum cover of a big green component which is contained in one side, a contradiction. We first remove an arbitrary cover vertex of some green big component; this also removes one of the blue big components, leaving us with at most three big components. The remaining three components are unspecified, so we simply remove a cover vertex of each one, in such a way that exactly two vertices are removed from each side. We thus obtain a $3$-colouring of $K_{3k-2,3k-2}$ without monochromatic \cms{} of size $k$, a contradiction to the induction hypothesis.

            By symmetry, we may assume that there are at most two big blue components and at most two big green components. By \Cref{claim:three-covers}, there are no three specified big components of the same type. So, if we first remove a cover vertex of each specified component, then we can remove a cover vertex from each of the remaining, unspecified, components, in such a way that we remove at most two vertices from each side. This is, again, a contradiction, and we have thus completed the proof of \Cref{prop:induction-asym-three-paths}.
        \end{proof}
            
        We now proceed with the second part of the argument, where our goal is to show that $r(k+1,k+1,k+1) \le 3k+1.$ We assume, for the sake of contradiction, that there is a colouring of $K_{n,n},$ with $n=3k+1,$ without a monochromatic \cm{} of size $k+1.$ We claim that one of the following two cases holds for each colour $C$.
        \begin{enumerate}[label=(\alph*), ref=\alph*]
            \item \label{itm:three-big}
                there are three big components in colour $C$,
            \item \label{itm:two-big}
                there are exactly two big components in colour $C$, both of which are specified and of the same type.
        \end{enumerate}
        Towards contradiction, assume that neither (\ref{itm:three-big}) nor (\ref{itm:two-big}) holds for, say, $C$ being red. As there are at most three big red components and (\ref{itm:three-big}) does not hold, there are at most two big red components. If there is only one such component, we can remove a cover vertex from it and remove another vertex from the other side, thus obtaining a $3$-colouring of $K_{3k,3k}$ without red \cms{} of size $k$ and without blue or green \cms{} of size $k+1$, a contradiction to \Cref{prop:induction-asym-three-paths}. Similarly, if there are exactly two red components $R_1$ and $R_2$, as (\ref{itm:two-big}) does not hold, we have, say, that $R_1$ is of type $R$ and $R_2$ is of type $L$ (recall that unspecified components are of both types), so we can remove one cover vertex of $R_1$ from $R$ and one cover vertex of $R_2$ from $L$, thus reaching a contradiction as before.
        
        If (\ref{itm:three-big}) holds for red, then there are at most $k$ vertices on each side whose red degree is at least $k+1$. To see this notice that if there were $k+1$ such vertices in $L$, in case they were all in the same red component, by the greedy argument, there would be a $k+1$ red \cm{}, a contradiction. Otherwise, there would be two red components with at least $k+1$ vertices in $R$, which implies that any remaining big red components has at most $3k+1-2(k+1)=k-1$ vertices in $R,$ so by \ref{itm:b2} it is not big, again a contradiction.
        
        Similarly, if (\ref{itm:two-big}) holds for red, and the two big red components are of type $R$, then at most $k-1$ vertices in $L$ have red degree at least $k+1$. Indeed, each big red component contains at least $k+1$ vertices from $L$, by \ref{itm:s3}, so there are at most $3k+1-2(k+1)=k-1$ vertices in $L$ which are not in any red big component. Furthermore, vertices in $L$ that are contained in big red components have red degree at most $k$ by \ref{itm:s1}.
        
        As every vertex has degree at least $k+1$ in some colour and there are $3k+1$ vertices in $R$ there is a colour, say red, such that $k+1$ vertices in $R$ have red degree at least $k+1$. So by the above (\ref{itm:two-big}) holds for red and the two big red components are of type $R$. By repeating this for the other side we find that there is another colour, say green, so that (\ref{itm:two-big}) holds for green and the two green components have type $L$. So, from now on we assume that (\ref{itm:two-big}) holds for both red and green, with both big red components being of type $R$ and both big green components of type $L$. We will distinguish between two cases, depending on whether blue satisfies (\ref{itm:three-big}) or (\ref{itm:two-big}).  

        Our goal now is to remove up to three vertices from each side in such a way that all big components are removed. This would imply the existence of a $3$-colouring of $K_{3k-2,3k-2}$ without monochromatic $k$-\cms, a contradiction to the inductive assumption.
        
        Suppose first that (\ref{itm:three-big}) holds for blue. Let the three big blue components be $B_1,B_2,B_3.$ As there are $6k+2$ vertices in total and, and by \ref{itm:b2} each big blue component has size at least $2k,$ we distinguish two cases, either $|B_1|=|B_2|=2k$ or $|B_1|=2k, |B_2|=|B_3|=2k+1.$ Note that $|B_i|=2k$ implies, by \ref{itm:b2}, that $B_i$ has exactly $k$ vertices on each side which are all cover vertices, so by \ref{itm:s2} $B_i$ is unspecified. In the second case, as $|R|=|L|=3k+1$ both $B_2$ and $B_3$ have exactly $k$ vertices on different sides, say $|B_2 \cap R|=k$ and $|B_3 \cap L|=k$. This means that in both cases each of $B_1$ and $B_2$ has $k$ cover vertices in $R$. Recall that there are two red specified big components of type $R$, hence together they have $2k$ cover vertices in $R$, by \ref{itm:s1}. It follows that there is a vertex in $R$ which is a cover vertex of both a big blue and a big red component. We remove this vertex from the graph, together with any cover vertex from each of the green components and any cover vertex of the remaining red component, thus removing two vertices from each of $L$ and $R.$ For the two remaining big blue components, we can remove them by removing one vertex from each side, which is possible as there is at most one blue specified component of each type (since $|B_i|=2k$ means $B_i$ is unspecified by \ref{itm:b2} and \ref{itm:s2}, so in the first case there is at most one specified blue component and in the second we have seen $B_2$ and $B_3$ need to be of distinct types). Thus, we obtain a $3$-colouring of $K_{3k-2,3k-2}$ with no monochromatic $k$-\cms{}, a contradiction.
        
        The remaining case is that (\ref{itm:two-big}) holds for blue and, without loss of generality, both big blue components are of type $R$. Now all colours satisfy (\ref{itm:two-big}), with red and blue big components being of type $R$ and green of type $L$. Then as $|R| = 3k+1$, there is a vertex that is a cover vertex of both a blue and a red big component. We remove such a vertex, as well as one cover vertex from each of the remaining big components. We reach a contradiction, as before. This completes the proof of \Cref{thm:three-col-symm-cm}.
    \end{proof}

	Let us show a simple asymmetric generalisation

    \begin{cor} \label{cor:asym-k-k->-l-cm}
        Let $k \le l$, then $r(k,l,l)=k+2l-2$.
    \end{cor}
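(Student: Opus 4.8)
The plan is to derive the lower bound from \Cref{ex:main-example} and to prove the matching upper bound by induction on $l$, with \Cref{thm:three-col-symm-cm} supplying the base case. For the lower bound I would apply \Cref{ex:main-example} with part sizes $a_1 = k-1$ and $a_2 = a_3 = l-1$: the longest red path then has order $2(k-1)+1$ and the longest green or blue path has order $2(l-1)+1$, so in this $3$-colouring of $K_{N,N}$ with $N = k + 2l - 3$ there is no red $k$-\cm{} and no green or blue $l$-\cm{}, giving $r(k,l,l) \ge k + 2l - 2$.

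For the upper bound I would induct on $l \ge k$. When $l = k$ the statement is exactly \Cref{thm:three-col-symm-cm}, and when $l = k+1$ it follows from \Cref{prop:induction-asym-three-paths} together with the lower bound above. So suppose $l \ge k+2$ and $r(k,l-1,l-1) \le k + 2l - 4$; it then suffices to show $r(k,l,l) \le k + 2l - 2$. Assume for contradiction that some $3$-colouring of $K_{N,N}$, where $N = k+2l-2$, has no red $k$-\cm{} and no green or blue $l$-\cm{}. Call a monochromatic component \emph{big} if it contains a matching of size $l-1$; as in the proof of \Cref{thm:three-col-symm-cm} (cf.\ properties \ref{itm:b1} and \ref{itm:b2}), every big component has a minimum cover of size $l-1$ and at least $l-1$ vertices on each side. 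Since $l \ge k+1$, no red component is big. Since the left parts of distinct big components of the same colour are disjoint and $3(l-1) > N$ when $l \ge k+2$, there are at most two big green components and at most two big blue components, hence at most four big components in all.

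The heart of the argument is to delete at most two vertices from each side so that every big component is destroyed; the resulting $3$-colouring of $K_{N-2,N-2}$ then has no red $k$-\cm{} and no green or blue $(l-1)$-\cm{}, contradicting the inductive hypothesis $r(k,l-1,l-1) \le N-2$. For each big component $C$ fix a minimum cover $Z_C$, of size $l-1$; the sets $Z_C$ are pairwise disjoint, so (again because $3(l-1) > N$) at most two of them lie entirely inside the left side and at most two lie entirely inside the right side. As there are at most four big components, a short counting check shows that one can choose a vertex $z_C \in Z_C$ for every big component $C$ with at most two of the chosen vertices on the left and at most two on the right; one then deletes these vertices, together with a few arbitrary extra vertices if needed to delete exactly two from each side. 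By \Cref{obs:konig}, deleting $z_C$ lowers the maximum matching of $C$ to $l-2$, and deleting vertices cannot create new big components, so no monochromatic component of the reduced colouring has a matching of size $l-1$, while there is still no red $k$-\cm{}. This contradicts the inductive hypothesis, completing the induction.

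The step I expect to be the main obstacle is the counting that yields at most two big components of each colour, and then the choice of the vertices $z_C$ within a budget of two deletions per side: both rely on $3(l-1) > N$, i.e.\ on $l \ge k+2$, which is exactly what makes the big components large relative to $N$. For $l = k+1$ one only obtains at most three big components of each colour, the budget of two deletions per side is no longer sufficient, and one is forced into the more delicate case analysis of \Cref{prop:induction-asym-three-paths}; this is precisely why $l = k+1$ has to be a separate base case rather than being absorbed into the inductive step.
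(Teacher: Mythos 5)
Your overall strategy mirrors the paper's: lower bound from \Cref{ex:main-example}, upper bound by induction on $l$ with base cases $l=k$ (from \Cref{thm:three-col-symm-cm}) and $l=k+1$ (from \Cref{prop:induction-asym-three-paths}), and an inductive step that kills all big components by deleting two vertices per side. However, there is a genuine gap in the counting step. You assert that the fixed minimum covers $Z_C$ of the big components are pairwise disjoint and conclude that at most two of them lie entirely in one side. The disjointness claim is false: two big green components indeed have disjoint covers (they are disjoint subgraphs), but a green cover and a blue cover can certainly intersect. In fact, when $l\ge k+2$ and both big green components and both big blue components are specified of type $L$, the two green covers already occupy $2(l-1)$ of the $|L|=k+2(l-1)$ vertices, so the blue covers are forced to overlap them. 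Thus three, or even all four, big components can have their entire minimum covers on the same side, and your ``short counting check'' — which implicitly budgets one distinct deleted vertex per cover contained in that side — breaks down.

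The correct observation, which the paper makes explicitly, is the weaker one: since $3(l-1)>N$, there do not exist three \emph{pairwise disjoint} minimum covers on the same side. So when three or four covers all sit in, say, $L$, some two of them must intersect, and a single vertex in that intersection destroys two big components at once. The paper then handles the case of four same-side covers by forming the auxiliary bipartite graph on the big components (green vs.\ blue, edges for intersecting covers), noting every three vertices span an edge, and extracting a perfect matching to choose two vertices hitting all four covers. Your argument needs this intersection-based choice of the $z_C$, not a disjointness-based count; as written it would fail precisely on the configuration where several covers sit on one side.
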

    
    \begin{proof}
        We have $r(k,l,l) \ge k+2l-2$ by \Cref{ex:main-example}.
        
        For the upper bound, we note that the case $k=l$ follows from \Cref{thm:three-col-symm-cm} and $l=k+1$ follows from \Cref{prop:induction-asym-three-paths}. We proceed with the induction on $l$, assuming $l \ge k+2$ and using the aforementioned cases as the basis.
        
        Consider a $3$-colouring of $K_{n,n}$, where $n = k+2l$. We assume that there are no red or blue \cms{} of size $l+1$, and no green \cms{} of size $k$. We note that there are at most two big red and at most two big blue components as $3l \ge 2l+k+1$ (here we call a red or blue component \emph{big} if it has a matching of size $l$). For the same reason there are no three disjoint minimum covers in these colours that are contained on one side.
        
        We claim that it is possible to remove two vertices from each side, such that at least one cover vertex is removed from each red or blue big component. Indeed, if there are at most two specified components of each type, this holds as the total number of big components is at most four. Hence, we may assume that at least three of the big components are specified and of type $R$. If there are exactly three such components, pick one vertex that is a cover vertex of two big specified type $R$ components (this is possible by the above remark about disjoint minimum covers), then pick another cover vertex from the remaining specified type $R$ component and pick any cover vertex in $L$ of the type $L$ component (if such a component exists). Finally, suppose that there are four specified type $R$ components. Let $H$ be the auxiliary graph whose vertices correspond to the big components, and put an edge between two components if their minimum covers intersect. Note that this is a bipartite graph, each of its sides has size $2$, and every three vertices span an edge (again by the above remark). It can be easily verified that $H$ has a perfect matching, so we can pick two vertices, such that for each big component, one of these vertices is a cover vertex.
        
        We removed two vertices from each side, such that for each big component one of its cover vertices is removed. We thus obtain a $3$-colouring of $K_{k+2l-2,k+2l-2}$ without red or blue $(l-1)$-\cms{} and without green $k$-\cms{}, a contradiction to the induction hypothesis.
    \end{proof}
    
    %We note that all the above results are tight, as shown by example \cref{ex:main-example} and noticing that a $2k-1$ monochromatic path induces a trivial $k$-\cm{} by taking every second edge.
    
    We now show that there are ranges of $k,l,m$ for which the \Cref{ex:main-example} is not tight. 
    \begin{lem}   \label{lem:lower-bound} 
        Let us assume $k\ge l\ge \frac{k}{2}$ and let $t=\min\{k-l, 2l-k\}$. Then 
        $$r(k+1,l+1,l+1) > k+2l+t.$$
    \end{lem}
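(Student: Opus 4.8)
The statement is a lower bound on a Ramsey-type quantity, so the plan is to exhibit, for $N = k + 2l + t$, an explicit $3$-edge-colouring of $K_{N,N}$ in which no red component contains a matching of size $k+1$ and no green or blue component contains a matching of size $l+1$; this gives $r(k+1,l+1,l+1) > N$ immediately. By K\"onig's theorem this is the same as requiring that every red component have a vertex cover of size at most $k$, and every green and every blue component have a vertex cover of size at most $l$.

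I would construct the colouring as a blow-up of a small fixed bipartite template. Take a bipartite graph $T$ with its edges $3$-coloured; attach to each vertex $u$ of $T$ a positive weight (the size of the block replacing $u$), with the weights on each side summing to $N$; then replace each vertex by a block of the prescribed size and each coloured edge of $T$ by a monochromatic complete bipartite graph. In the resulting colouring the components of each colour are precisely the blow-ups of the monochromatic components of $T$, and the matching number of such a blown-up component equals the minimum \emph{weighted} vertex cover of the corresponding component of $T$ (the blow-up is bipartite, so K\"onig applies, and one checks that a minimum cover can be taken on full blocks). Thus the task reduces to choosing $T$, its colouring, and the weights so that every red component of $T$ has a vertex cover of weight at most $k$, every green and blue component of $T$ has one of weight at most $l$, and the weights on each side sum to $k + 2l + t$.

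I would then split according to which term realises the minimum in $t$: the case $l \ge 2k/3$ (so $t = k-l$, $N = 2k+l$) and the case $l \le 2k/3$ (so $t = 2l-k$, $N = 4l$), designing the template separately in each. The guiding principle is that a blown-up component can occupy many vertices while keeping a small matching number when the underlying component of $T$ is path-like: a path has a minimum vertex cover supported on its interior vertices, so assigning small weights to the interior blocks keeps the matching number small even though the large end-blocks still lie in that component. One colour is used in this efficient way so that the other two colours are left only to cover a structurally simple remainder — essentially a disjoint union of complete bipartite pieces, each thin enough to be divided between two colours with every component staying within its cover budget. In each case the template lives on a small number of blocks whose sizes are simple linear expressions in $k, l, t$, and the verification amounts to the routine but fiddly computation of the minimum weighted vertex cover of every monochromatic component of $T$ and a check that it meets the target.

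The real obstacle is the design of $T$ and the weights. Naive candidates — the trivial partition of \Cref{ex:main-example}, a $3\times 3$ Latin-square colouring, or a single large ``cross'' in one colour together with a two-colouring of the remainder — all get stuck at $N = k+2l$, merely reproducing \Cref{ex:main-example}; the extra $t$ can be gained only from a template in which one colour truly has path-like components and in which some blocks are ``free'', belonging to no component's minimum cover. Finding such a template, and confirming that every monochromatic component stays within budget for all admissible pairs $(k,l)$, is where the difficulty lies; once the template is fixed, computing the covers, adding up the weights, and concluding via K\"onig is mechanical.
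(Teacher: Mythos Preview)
Your framework is exactly the paper's: build the colouring as a blow-up of a small bipartite template, and bound the matching number of each monochromatic component via a small one-sided cover of the blow-up (equivalently, a small weighted vertex cover of the corresponding template component). The gap is that you never actually supply a template or weights. You say as much yourself: ``Finding such a template \ldots\ is where the difficulty lies; once the template is fixed \ldots\ is mechanical.'' But that ``mechanical'' part \emph{is} the proof of this lemma; without an explicit construction nothing has been established. What you have written is a correct strategy together with correct heuristics about what a good template should look like, not an argument.

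For comparison, the paper does not split on which term realises $t$; a single $5\times 3$ template works uniformly for all $k/2 \le l \le k$. The left blocks $A_1,\ldots,A_5$ have sizes $l,\ k-l+t,\ l,\ k-l+t,\ 2l-k-t$, the right blocks $B_1,B_2,B_3$ have sizes $k,\ k,\ 2l-k+t$ (both sides sum to $k+2l+t$, and $t\le\min\{k-l,2l-k\}$ makes every size nonnegative). The colouring is chosen so that every monochromatic component of the template is a star; in each such star one of the two sides has total weight at most $k$ for red and at most $l$ for blue and green, which is exactly the cover bound you were aiming for. Your intuition about path-like components with a heavy ``interior'' vertex and light ends is precisely the mechanism at work---each star has a heavy centre that is reused as a centre in other colours---but the actual block sizes and the assignment of colours to the $15$ pairs $(A_i,B_j)$ are what you still need to write down.
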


    \begin{proof}
        We exhibit a $3$-edge colouring of $K_{n,n}$, where $n = k+2l+t$, with no red $(k+1)$-\cm, or a blue or green $(l+1)$-\cm{}.        
        
        Let $A_1,...,A_5$ be disjoint sets of sizes: $l$, $k-l+t$, $l$, $k-l+t$, $2l-k-t$ respectively; denote $A = A_1 \cup \ldots \cup A_5$. Let $B_1, B_2, B_3$ be disjoint sets of sizes: $k$, $k$, $2l-k+t$; let $B = B_1 \cup B_2 \cup B_3$. Then $|A| = |B| = n$. Colour the edges between $A$ and $B$ as follows, where $(i,j)$ denotes the edges between $A_i$ and $B_j$:

        \begin{itemize}
            \item red edges: $(1,1), (2,1), (3,2), (4,2), (5,3)$;
            \item blue edges: $(1,2), (2,3), (3,3), (4,1), (5,1)$;
            \item green edges: $(1,3), (2,2), (3,1), (4,3),(5,2)$.
        \end{itemize}
        
        We note that each red component has at most $k$ vertices in one of the sides, while every green and blue component has at most $l$ vertices on one of the sides, where for the components containing $B_3$ this holds since $|B_3|=2l-k+t \le 2l-k+(k-l)=l\le k$. It follows that there is no red $(k+1)$-\cm, or a blue or green $(l+1)$-\cm{}. In particular, $r(k+1,l+1,l+1) \le k+2l+t$, as required.
    \end{proof}
    \begin{figure}[ht]
        \caption{Illustration of the Example given in Lemma \ref{lem:lower-bound}.}
        \includegraphics[scale=.9]{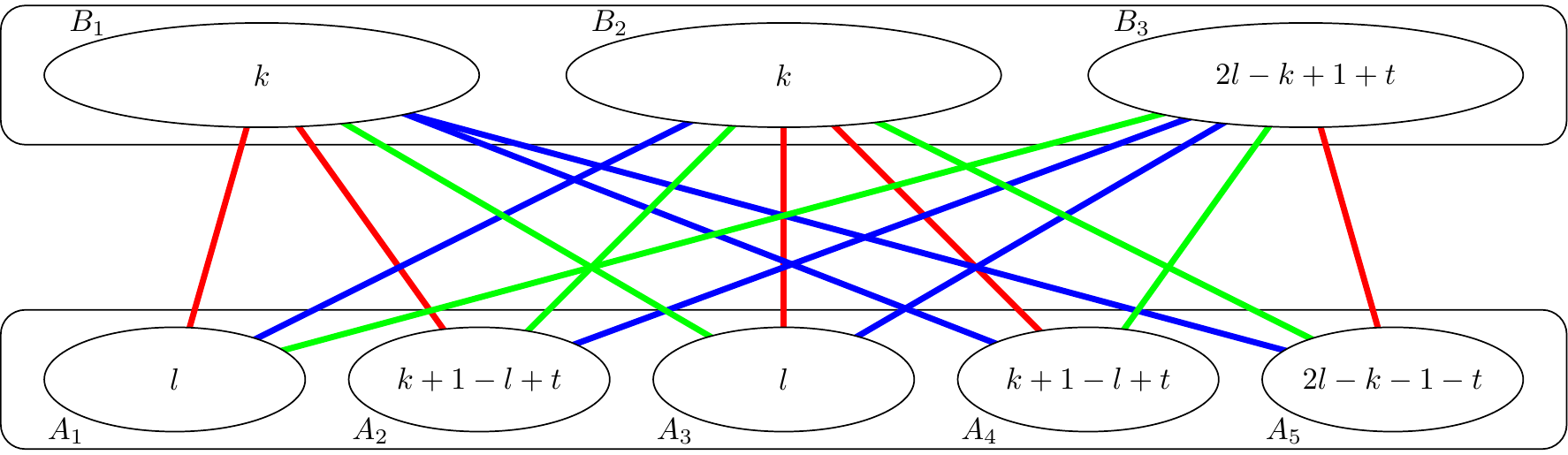}
        \label{fig:eg2}
	\end{figure}

    \begin{cor} \label{cor:asym-k-k-<-l-cm}
        Given $\frac{2}{3}k \le l\le k$, $r(k,l,l)=2k+l-2$.
    \end{cor}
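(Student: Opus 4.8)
The plan is to prove the matching upper and lower bounds separately. For the lower bound $r(k,l,l) \ge 2k+l-2$, I would simply apply \Cref{lem:lower-bound}. Concretely, given $\tfrac23 k \le l \le k$, set $k' = l$ and $l' = \lceil (k+l)/2\rceil$ (or a nearby value) so that $k' \ge l'$ and $l' \ge k'/2$, chosen so that the quantity $k' + 2l' + \min\{k'-l', 2l'-k'\}$ in \Cref{lem:lower-bound} equals $k + l - 2$ after shifting indices by one; a short calculation picking the split carefully should yield a $3$-colouring of $K_{n,n}$ with $n = 2k+l-3$ having no monochromatic connected matching of the required sizes, hence $r(k,l,l) > 2k+l-3$, i.e.\ $r(k,l,l) \ge 2k+l-2$. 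The one thing to be careful about here is that the parameters of \Cref{lem:lower-bound} must be integers and satisfy the hypotheses $k'\ge l' \ge k'/2$; since $\tfrac23 k \le l$ there is enough room, but the exact arithmetic (and possibly a parity case distinction) needs to be checked.

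For the upper bound $r(k,l,l) \le 2k+l-2$, I would argue that for this range of parameters one actually has $l \le 2(k-1)$ or similar, so $K_{2k+l-2, 2k+l-2}$ already forces one of the desired matchings via a reduction to an already-established case. The cleanest route: observe that $2k+l-2 = k' + 2l' - 2$ with $k' = l$, $l' = k$ — wait, that gives $l + 2k - 2$, exactly our bound — but the roles are $r(l, k, k)$, not $r(k,l,l)$. Since $l \le k$, \Cref{cor:asym-k-k->-l-cm} (with its $k \le l$ there renamed) gives $r(l,k,k) = l + 2k - 2 = 2k+l-2$; but a red $k$-matching, blue/green $l$-matchings are sought in $r(k,l,l)$, whereas $r(l,k,k)$ seeks a red $l$-matching and blue/green $k$-matchings. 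These are genuinely different, so I cannot invoke it directly. Instead I would follow the template of the proof of \Cref{cor:asym-k-k-<-l-cm}'s sibling results: assume a $3$-colouring of $K_{n,n}$, $n = 2k+l-2$, with no red $k$-\cm{} and no blue or green $l$-\cm, define big red components (matching of size $k$) and big blue/green components (matching of size $l$), bound their number using $n$, and use the cover-vertex removal technique (\Cref{obs:konig}, properties \ref{itm:b1}--\ref{itm:s3}) to delete a bounded number of vertices from each side, killing all big components, and land in a smaller complete bipartite graph where an inductive hypothesis (on $k$, or on $k-l$) applies.

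The main obstacle I anticipate is the bookkeeping in the removal step: one must remove the same number of vertices from each side, and the number removed must be small enough that the leftover $K_{n',n'}$ falls under the induction hypothesis with the index drops matching up (red index drops by the number of red-cover removals, etc.). The analogue of \Cref{claim:three-covers} — controlling how many specified big components of a given type can coexist on one side — will likely be the crux, and its proof (a degree-counting argument in the style of the proof of \Cref{prop:induction-asym-three-paths}) is where the hypothesis $l \ge \tfrac23 k$ should get used, since that is exactly the threshold separating the regime where \Cref{ex:main-example} is tight from the regime of \Cref{lem:lower-bound}. I would set up the induction on $k$ with base cases near $l = \tfrac23 k$ and $l = k$ (the latter being \Cref{thm:three-col-symm-cm}), and in the inductive step remove one cover vertex per big component, grouping shared cover vertices as in the auxiliary-graph argument at the end of the proof of \Cref{cor:asym-k-k->-l-cm} to ensure at most the allowed number of deletions per side.
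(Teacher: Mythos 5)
Your upper-bound argument has a genuine gap, and it is exactly at the point where you talked yourself out of the short proof. You correctly spotted that $2k+l-2$ equals $r(l,k,k)$ via \Cref{cor:asym-k-k->-l-cm} (taking there $k'=l$, $l'=k$, which satisfies $k'\le l'$ since $l\le k$), but then rejected it because $r(k,l,l)$ and $r(l,k,k)$ ask for different colour/size pairings. What you missed is monotonicity of $r$ in each argument: since $l\le k$, any green (or blue) $k$-\cm{} contains a green (or blue) $l$-\cm, so forcing the larger matching is at least as hard, and hence $r(k,l,l)\le r(k,k,l)$. Combined with the trivial colour-permutation identity $r(k,k,l)=r(l,k,k)$, this gives $r(k,l,l)\le r(l,k,k)=l+2k-2=2k+l-2$, which is precisely how the paper proves the upper bound in one line. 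Your fallback of setting up a fresh induction with big components, an analogue of \Cref{claim:three-covers}, and cover-vertex removal is not carried out, and would be far more work than is needed; I would not count it as a proof.

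For the lower bound your instinct to use \Cref{lem:lower-bound} is right, but the parametrisation you sketch ($k'=l$, $l'=\lceil(k+l)/2\rceil$) is off. The correct substitution is $k'=k-1$, $l'=l-1$, so that the lemma's conclusion $r(k'+1,l'+1,l'+1)>k'+2l'+t$ becomes a bound on $r(k,l,l)$ directly. The hypothesis $\tfrac23 k\le l\le k$ gives $k'\ge l'\ge k'/2$ and makes $t=\min\{k'-l',2l'-k'\}=k-l$, whence $k'+2l'+t=(k-1)+2(l-1)+(k-l)=2k+l-3$ and $r(k,l,l)\ge 2k+l-2$. So the lower-bound idea is salvageable with the right arithmetic, but the upper bound as written does not go through.
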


    \begin{proof}
        As $k\ge l$ we have $r(k,l,l) \le r(k,k,l)=2k+l-2,$ where the equality follows from \Cref{cor:asym-k-k->-l-cm}. The assumption $\frac{2}{3}k \le l\le k$ implies we have $t=k-l$ in Lemma \ref{lem:lower-bound} from which it follows that $r(k,l,l) \ge 2k+l-2.$ 
    \end{proof}

	\Cref{cor:asym-k-k->-l-cm,cor:asym-k-k-<-l-cm} determine $r(k,l,l)$ for $l \ge  \frac{2}{3}k$. We are able to determine $r(k,l,l)$ for all $k$ and $l$, using similar methods; we omit the proof but summarise the results here.
	\begin{thm} \label{thm:k-k-l-all}
	    $$r(k,l,l) = 
            \begin{cases}
                k+2l-1 &\quad\text{if } l \le \frac{k+1}{2}\\
                4l-2 &\quad\text{if } \frac{k+1}{2} < l \le \frac{2k}{3}\\
                2k+l-2 &\quad\text{if } \frac{2k}{3} < l < k\\
                k+2l-2 &\quad\text{if } k \le l\\
            \end{cases}
        $$
    \end{thm}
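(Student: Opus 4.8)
The plan is to prove the four cases essentially in parallel, reusing as much of the machinery of \Cref{thm:three-col-symm-cm}, \Cref{prop:induction-asym-three-paths}, \Cref{cor:asym-k-k->-l-cm} and \Cref{lem:lower-bound} as possible. First, two of the cases are already in hand: the case $k \le l$ is exactly \Cref{cor:asym-k-k->-l-cm}, and the case $\frac{2k}{3} < l < k$ is \Cref{cor:asym-k-k-<-l-cm}; the latter also fixes the boundary value $l = \frac{2k}{3}$, while \Cref{thm:three-col-symm-cm} handles $l = k$ and \Cref{prop:induction-asym-three-paths}/\Cref{cor:asym-k-k->-l-cm} handle $l=\frac{k+1}{2}$, so all the case boundaries are mutually consistent. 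Hence only the two ``small $l$'' regimes, $l \le \frac{k+1}{2}$ and $\frac{k+1}{2} < l \le \frac{2k}{3}$, need a new argument.

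For the lower bounds in these regimes one exhibits explicit colourings, in the spirit of \Cref{ex:main-example} but following more closely the multi-block construction in the proof of \Cref{lem:lower-bound} (which already comes within a bounded amount of the claimed value). Concretely, one partitions each side of $K_{n,n}$ into a constant number of blocks $A_1,\dots,A_p$ and $B_1,\dots,B_q$, with sizes chosen as explicit functions of $k$ and $l$, and colours each edge of $A_i\times B_j$ according to a fixed colour pattern, so that: (i) each colour class is a disjoint union of complete bipartite blocks $A_i\times B_j$, which keeps every monochromatic component connected and makes its matching number equal to the smaller of its two side–sizes; (ii) every red component has at most $k-1$ vertices on one side; (iii) every blue and every green component has at most $l-1$ vertices on one side. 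Checking (ii)–(iii) is a finite case analysis according to which of $k-l$ and $2l-k$ is smaller, exactly as in \Cref{lem:lower-bound}; the delicate point is to tune the block sizes (often via an extra row/column or a parity adjustment to one block) so that the total $n$ reaches $k+2l-2$ in the first regime and $4l-3$ in the second, rather than the naive $k+2l-3$ coming straight from \Cref{ex:main-example}.

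For the upper bounds one argues by induction (on $l$, or on $k+l$), with \Cref{thm:three-col-symm-cm}, \Cref{prop:induction-asym-three-paths} and \Cref{cor:asym-k-k->-l-cm} serving as base cases, and mirrors the proof of \Cref{cor:asym-k-k->-l-cm}. Given a $3$-colouring of $K_{n,n}$ with $n$ equal to the claimed value, and assuming for contradiction that there is no red $k$-\cm{} and no blue or green $l$-\cm, one declares a red component \emph{big} if it has a matching of size $k$ and a blue or green component \emph{big} if it has a matching of size $l$; by \Cref{obs:konig} deleting one cover vertex of a big component destroys it. Using König's theorem one sets up, with the two thresholds $k$ and $l$ in place of the single threshold in \Cref{thm:three-col-symm-cm}, the analogues of properties \ref{itm:b1}–\ref{itm:b3} and \ref{itm:s1}–\ref{itm:s3} and the classification of big components into types $L$, $R$ and into specified/unspecified, together with the analogue of \Cref{claim:three-covers} (no three pairwise disjoint minimum covers of distinct-colour big components on one side). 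One then shows that all big components can be removed by deleting at most two vertices from each side — so that we reach a complete bipartite graph of the size covered by the inductive hypothesis, which is the required contradiction — by a counting argument: when there are many big blue/green components, degree considerations force there to be few big red ones, and vice versa, so a bounded set of well-chosen cover vertices suffices.

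The main obstacle is this upper-bound induction in the first two regimes: because the red threshold $k$ may be much larger than the blue/green threshold $l$, the count of how many big components of each colour and type can coexist — and hence how many cover-vertex deletions are forced — is considerably subtler than in \Cref{thm:three-col-symm-cm}, and squeezing out the exact constant (the gap between $k+2l-1$, $4l-2$ and the weaker $k+2l-2$) requires tracking the deletions on each side with care. A secondary obstacle lies on the lower-bound side: one must design block constructions that are tight to the last vertex, which the uniform construction of \Cref{lem:lower-bound} is not, and this seems to require treating the two small-$l$ regimes (and possibly sub-ranges within them, depending on the residue of $k$ modulo small integers) with slightly different block patterns.
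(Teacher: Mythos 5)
The paper does not actually prove \Cref{thm:k-k-l-all}: immediately before the statement it says ``we omit the proof but summarise the results here''. So there is no proof in the paper to compare yours against, and the best I can do is assess your plan on its own terms. You are right that two of the four cases are already in hand --- $k\le l$ is \Cref{cor:asym-k-k->-l-cm}, and $\frac{2k}{3}<l<k$ (together with the boundary $l=\frac{2k}{3}$) is \Cref{cor:asym-k-k-<-l-cm} --- and that the boundary values are mutually consistent. But for the two remaining regimes what you give is a programme, not a proof, and there are two concrete places where the programme as stated does not go through.

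On the lower bounds, you observe correctly that one extra vertex per side beyond \Cref{ex:main-example} (resp.\ \Cref{lem:lower-bound}) is needed, but you never exhibit the colourings. This is not a detail that can be waved away: the construction of \Cref{lem:lower-bound} requires $t=\min\{k-l,2l-k\}\ge 0$ and hence $l\ge k/2$, and even there it only reaches $k+2l-2+t$, which in the range $\frac{k+1}{2}<l\le\frac{2k}{3}$ gives $4l-3$, one short of the claimed $4l-2$. In the range $l\le\frac{k+1}{2}$ the block set $A_5$ of size $2l-k-t$ would be negative, so the pattern does not even make sense. So a genuinely new family of constructions is needed, and ``tune the block sizes'' is precisely the step one would want to see. (A related warning sign: as written, the theorem's first case is actually false at $l=1$, where $r(k,1,1)=k$ rather than $k+1$ --- if every edge is red, $K_{k,k}$ already contains a red $k$-\cm{}. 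So any lower-bound construction you write down must implicitly require $l\ge 2$, and whatever induction you run must stop there.)

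On the upper bounds, the proposed induction ``mirroring the proof of \Cref{cor:asym-k-k->-l-cm}'' does not transfer. In \Cref{cor:asym-k-k->-l-cm} the two thresholds being decremented are both the larger number $l$, and the key fact that makes two deletions per side suffice is that there are at most two big components of each of the two relevant colours, because $3l\ge 2l+k+1$. In your small-$l$ regimes the roles are reversed: the colours whose thresholds must drop from $l$ to $l-1$ are blue and green, whose big components need only $\ge l-1$ vertices per side, so in $K_{n,n}$ with $n=k+2l-1$ there can be as many as $\lfloor n/(l-1)\rfloor\approx k/l$ of them --- unbounded when $k/l$ is large. No bounded number of deletions can kill them all, so an induction that decreases $l$ (or $k+l$) while deleting $O(1)$ vertices per side cannot close. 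What would work, at least for the first regime, is an induction that decreases $k$ with $l$ fixed: the red threshold drops, and a red ``big'' component now needs $\ge k-1$ vertices per side, so for $l<\frac{k-1}{2}$ there is at most one of them and a single deletion per side suffices, with the base case being the boundary $k=2l-1$ of the second regime. You would then need a separate argument for the second regime, where a decreasing-$k$ induction gives no gain (the claimed value $4l-2$ does not depend on $k$). Your proposal acknowledges that the count of big components ``is considerably subtler'' but offers no mechanism to control it; identifying the right induction parameter in each regime is exactly the missing idea.
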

    
    The general asymmetric case of $r(k,l,m)$ remains open, mostly because it is highly unclear what the tight examples should look like. We present the rest of the argument for the symmetric case only to reduce the notational clutter, but the remaining arguments easily generalise to the asymmetric case. In particular, in order to determine (asymptotically) the asymmetric bipartite Ramsey numbers of even cycles it is enough to determine the values of $r(k,l,m).$ 
    
\section{Monochromatic \cms{} in almost complete bipartite graphs}\label{sec:almost-complete}
	
	In this section we generalise the results of the previous section to a setting where the underlying complete bipartite graph is replaced by an almost complete bipartite graph; this makes it suitable for use together with the regularity lemma (see \Cref{sec:regularity}). Interestingly, rather than reproving \Cref{thm:three-col-symm-cm} for almost complete bipartite graphs directly, we reduce the problem for almost complete bipartite graphs to the problem for complete bipartite graphs.
	
	%\begin{thm} \label{thm:three-col-symm-cm-almost-complete}
		%Let $\alpha = 8^6$ and $\beta = 3 \cdot 16$. For every $0 < \eps < \frac{1}{\alpha + 3\beta}$ there is $n_0=n_0(\eps)$ such that the following holds. Given $n \ge n_0,$ let $G$ be a bipartite graph with bipartition $\{R, L\}$ with $|R| = |L| = N$, where $N \ge (3 + (\alpha + 3\beta)\eps)n$. Furthermore, suppose that every vertex in $R$ has at most $\eps n$ non-neighbours in $L$, and vice versa. Then, in every $3$-colouring of $G$, there is a monochromatic $n$-\cm.
	%\end{thm}

    \begin{thm} \label{thm:three-col-symm-cm-almost-complete}
		Let $0< \eps < 8^{-6}$ and $N\ge (3 + 8^6 \eps)n.$ Let $G$ be a subgraph of $K_{N,N}$ of minimum degree at least $N-\eps n.$ Then, in every $3$-colouring of $G$, there is a monochromatic $n$-\cm.
	\end{thm}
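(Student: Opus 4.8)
The plan is to argue by contradiction and reduce to the complete bipartite case. Suppose some $3$-colouring of $G$ has no monochromatic $n$-\cm{}. The goal is to manufacture, out of this colouring, a $3$-colouring of a \emph{complete} bipartite graph $K_{m,m}$ with $m \ge 3n-2$ and still no monochromatic $n$-\cm{}; this contradicts \Cref{thm:three-col-symm-cm}, which gives $r(n,n,n) = 3n-2$. The only freedom we have is how to treat the "missing edges'' of $G$ (pairs of $K_{N,N}$ that are non-edges of $G$): since $G$ has minimum degree at least $N - \eps n$, these form a bipartite graph of maximum degree at most $\eps n$. First I would delete an appropriate set $D$ of at most $8^6\eps n$ vertices from each side, keep the colour of every surviving edge of $G$, and assign a colour to each surviving missing edge, obtaining a proper $3$-colouring of $K_{m,m}$ with $m = N - |D| \ge 3n-2$. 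The point of the slack $N \ge (3 + 8^6\eps)n$ in the hypothesis is exactly to absorb the deletions while keeping $m \ge 3n - 2$.

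The structural input is König's theorem, as in \Cref{obs:konig}: "no monochromatic $n$-\cm{}'' is equivalent to "every monochromatic component has a vertex cover of size at most $n-1$''. When we colour a missing edge with colour $C$, we may merge the $C$-component of its left endpoint with the $C$-component of its right endpoint, and the matching number of the union is at most the sum of the two matching numbers plus one; so a colour $C$ is "safe'' for a missing edge precisely when both of its endpoints lie in $C$-components of small matching number. I would therefore classify, for each colour, the monochromatic components according to their matching number, and observe (using the minimum-degree condition, which forces every vertex to have \emph{some} colour in which its degree exceeds $n$, since otherwise its total degree would be at most $3n-3$) that the vertices that are genuinely problematic — those that sit in "large'' monochromatic components in several colours at once — are few and are confined to the cover vertices of a controlled family of large components. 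These go into the deletion set $D$, and once they are removed, every remaining missing edge admits a colour for which it can only merge components of small matching number. One then checks, component by component, that the resulting $K_{m,m}$ has no monochromatic $n$-\cm{}, which is the desired contradiction.

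The main obstacle is exactly the bookkeeping in the previous paragraph: one must quantify "large'' and "small'' components and "problematic'' vertices with mutually compatible thresholds, bound the size of the deletion set $D$ by $8^6\eps n$, and simultaneously guarantee that no surviving missing edge is ever forced to glue together two components whose matching numbers add up to at least $n$ — all while tracking the interaction between the (at most $\eps n$ per vertex) missing edges and the component structures of all three colours at once. Getting these three constraints to hold simultaneously is where the large constant $8^6$ is consumed, and is the technical heart of the argument; the final appeal to \Cref{thm:three-col-symm-cm} is then immediate.
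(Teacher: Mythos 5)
Your high-level idea — absorb the deficiency of $G$ by removing $O(\eps n)$ vertices, treat the surviving missing edges in a way that cannot create a new large connected matching, and appeal to the exact complete-bipartite result \Cref{thm:three-col-symm-cm} — is the right one, and it is the same strategic reduction the paper uses. However, there is a concrete gap in your mechanism for handling the missing edges, and it is exactly the place where the paper does something more careful.

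You propose to declare a colour $C$ \emph{safe} for a missing edge $uv$ when both endpoints lie in $C$-components of small matching number, on the grounds that colouring $uv$ with $C$ merges two small components into one whose matching number goes up by at most $1$. That is correct for a single edge, but it is not a cumulative guarantee: once you colour many missing edges with the same colour $C$, they can chain, and a path of ``safe'' $C$-coloured missing edges through $t$ originally-small $C$-components produces a single $C$-component whose matching number is bounded only by the sum of the $t$ matching numbers plus $t-1$. Nothing in your sketch prevents $t$ from being large, and the minimum-degree bound alone does not cap the number of small monochromatic components. So the ``component by component'' check at the end does not go through as stated: you would need the safety criterion to be stable under composition, not just for one edge at a time.

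The paper sidesteps this precisely by never assigning an arbitrary safe colour to a non-edge whose endpoints merely sit in small components. Instead, it adds to $G$ only those non-edges that are incident with a fixed minimum cover $W$ of a (virtual) component and stay inside that component: by K\"onig's theorem such additions leave the cover number, and hence the matching number, of that component unchanged, no matter how many are added. The remaining non-edges are then grouped into at most $8^6$ ``types'' according to which red/blue/green virtual component each endpoint lies in, and a short argument (using that a cross edge between two non-matched vertices of the same type would already have been added at a cover vertex) shows each type is coverable by at most $\eps n$ vertices; deleting the union of these covers, at most $8^6 \eps n$ vertices, yields a genuine complete bipartite $K_{m,m}$ with $m\ge 3n$, and the connected matching found there is then transported back to $G$ because matching numbers were preserved. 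Your writeup also leaves the deletion-set bound of $8^6 \eps n$ and the identification of ``problematic'' vertices at the level of intent rather than proof; the type/cover device is what makes those bounds concrete. If you replace ``safe colour = both endpoints in small components'' with ``non-edge incident with a minimum cover inside its component'' and add the type classification, you will essentially have reproduced the paper's argument.
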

    
	\begin{proof}
	    Fix a $3$-colouring of $G$. Our goal is to find a monochromatic $n$-\cm{}. Note that we may assume that $N=\ceil{ (3+8^6\eps)n}$ (if $N$ is larger, just remove some vertices from $G$).
		
		We partition the vertices of $G$ into so-called red \emph{virtual component} $C_{R,1}, \ldots, C_{R,t}$, where for every $i \in [t]$, $C_{R,i}$ is either a red component of order at least $n$, or it is the union of red components of order smaller than $n$ each such that the union has order smaller than $2n$. It is easy to see that we may further assume that all, but at most one, of the virtual components have order at least $n$. It follows that there are at most eight red virtual component, since $N=\ceil{ (3+8^6\eps)n}\le 4n$. We remark that there may be many such partitions; we choose one possible partition arbitrarily and fix it throughout the proof.
		We obtain similar partitions of the vertices into blue virtual component $C_{B, i}$ and green virtual component $C_{G, i}$, where $i \in [8]$.
        
        For each virtual component $C$ we pick a minimum cover set $W$ of $C$. We now add to $G$, in the colour of $C$, all non-edges (with one end in $R$ and one in $L$) that are incident with $W$ inside $C$ and denote the resulting graph by $G_1$. We note that the size of the minimum cover of $C$ stays the same, hence by K\"onig's Theorem, so does the size of the maximum matching in each virtual component. Also note that every virtual component of $G$ is also a virtual component of $G_1$.
        
        To each non-edge of $G_1$ (with one end in $R$ and one in $L$) we assign one of at most $8^6$ types: we say that a non-edge is of type $(a,b,c,d,e,f)$ if its end in $R$ belongs to the virtual component $C_{R,a}, C_{B,b}, C_{G,c}$ and its end in $L$ belongs to the components $C_{R, d}, C_{B, e}, C_{G, f}$.
        
		\begin{claim} \label{claim:cover-bad-edges}
			For any fixed type there is a set of at most $\eps n$ vertices that covers all non-edges of $G_1$ of this type.
		\end{claim}

		\begin{proof}
			Let us fix a type $(a,b,c,d,e,f)$ and let us assume for the sake of contradiction that there is a matching $x_1y_1, \ldots, x_ty_t$ of non-edges of $G_1,$  with $t>\eps n.$ 

			If there are no edges in $G_1$ between $X=\{x_1,\ldots x_t\} \subseteq L$ and $Y=\{y_1,\ldots y_t\} \subseteq R$ then any of the vertices in $X \cup Y$ has more than $\eps n$ non-neighbours in $G_1$, so also in $G,$ a contradiction. So we may assume that there is an edge $x_iy_j$ in $G_1.$ Without loss of generality, we assume that $x_iy_j$ is red, which implies that $a=d$. This in turn implies that either $x_i$ or $y_j$ is in the cover $W$ of $C_{R,a}$ that we used to define $G_1$. But from this it follows that either $x_i y_i$ or $x_j y_j$ is in $G_1$, by definition of $G_1$, a contradiction.
		\end{proof}

		It is now easy to complete the proof of \Cref{thm:three-col-symm-cm-almost-complete}. For each type $(a,b,c,d,e,f)$, fix a cover of the non-edges of this type, as guaranteed by the above claim, and let $U$ be the union of these covers. Note that $U$ is a cover of the non-edges of $G_1$ of size at most $8^6 \eps n$.

		Construct a graph $G_2$ by removing $W$ from the vertex set of $G_1$. $G_2$ is a $3$-coloured complete bipartite graph, with at least $N-8^6\eps n\ge 3n$ vertices on each side. By \Cref{thm:three-col-symm-cm}, $G_2$ contains a, say, red $n$-\cm{} $M$. By construction, $M$ is contained in a red virtual component $C_{R,i}$. In fact, since $M$ spans at least $2n$ vertices, $C_{R,i}$ must be a genuine component in $G$ (rather than a union of components with fewer than $2n$ vertices in total). As $C_{R,i}$ contains a matching of size $n$, its cover number with respect to $G_1$ is at least $n$, which by construction of $G_1$ implies that its cover number with respect to $G$ is also at least $n$. It follows that $G$ contains an $n$-\cm, as required.
	\end{proof}
			
\section{Proof of Theorem \ref{thm:main}}\label{sec:regularity}
	This section is split into two subsections. In the first subsection we introduce the preliminaries that are required for stating the regularity lemma and \L{}uczak's method of \cms{}, and then we proceed to use these tools to complete the proof of \Cref{thm:main}. 
	
    \subsection{Regularity lemma preliminaries}
	    In this subsection we state the regularity lemma and a specific lift lemma which we will need to obtain the desired cycle in the original graph.
	 
        Let us recall some basic definitions related to the regularity lemma. Let $A,B$ be disjoint subsets of vertices in a graph $G$. We denote by $e_G(A,B)$ the number of edges in $G$ with one endpoint in $A$ and one in $B$, and denote the edge density by $d_G(A,B)=\frac{e_G(A,B)}{|A||B|}$. Given $\eps > 0$, we say that the pair $(A,B)$ is $\eps$-\textit{regular} (with respect to the graph $G$) if for every $A'\subseteq A$ and $B' \subseteq B$ satisfying $|A'| \ge \eps |A|$ and $|B'| \ge \eps |B|$ we have
        $$\left| d_G(A',B') -d_G(A,B)\right| < \eps.$$
    
        A partition $\P=\{V_0, V_1,\ldots, V_k\}$ of the vertex set $V$ is said to be $(\eps,k)$-\textit{equitable} if $|V_0|\le \eps |V|$ and $|V_1|=\ldots= |V_k|.$ An $(\eps,k)$-equitable partition $\P$ is $(\eps,k)$-\textit{regular} if all but at most $\eps \binom{k}{2}$ pairs $(V_i,V_j)$ with $1 \le i < j \le k$ are $\eps$-regular. Szemer\'edi's regularity lemma \cite{sze1978-regularity} states that for any $\eps$ and $k_0$ there are $K_0 = K_0(\eps,k_0)$ and $n_0 = n_0(\eps,k_0),$ such that any graph on at least $n_0$ vertices admits an $(\eps,k)$-regular partition with $k_0 \le k \le K_0.$ We shall use the following so-called degree form of the regularity lemma \cite{regularity-survey}, adapted to a $3$-colour and bipartite setting. The version we state here is specifically adapted to our needs, and the adaptations, while perhaps not very standard, are easy to achieve by slightly modifying the usual proof of the regularity lemma.

  	    \begin{lem}\label{lem:mult-col-reg}
            For any $\eps >0$ and $k_0$ there exist $K_0=K_0(\eps,k_0)$ and $n_0=n_0(\eps,k_0),$ such that the following holds. Let $G$ be a $3$-coloured balanced bipartite graph, with bipartition $\{R, L\}$, where $|R| = |L| = n \ge n_0$. Then there exists an $(\eps, 2k)$-equitable partition $\P = \{V_0, \ldots, V_{2k}\}$ of $V(G)$ such that the following properties hold.
            \begin{enumerate}[label=(\alph*), ref=\alph*]
                \item \label{itm:reg-consistent-parts}
                    $V_i$ is contained in $R$ for $1 \le i\le k$ and in $L$ for $k < i\le 2k$.
                \item \label{itm:reg-balanced}
                    $|V_0 \cap R| = |V_0 \cap L|$;
                \item
                    $k_0 \le 2k \le K_0$;
                \item \label{itm:reg-density}
                    for every $i \in [2k]$, for all but at most $2\eps k$ values of $j \in [2k]$, $(V_i, V_j)$ is $\eps$-regular with respect to each of the colours of $G$.
            \end{enumerate}
  	    \end{lem}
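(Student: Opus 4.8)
The plan is to run the usual iterative proof of Szemer\'edi's regularity lemma, but to build the bipartite and balance constraints into the construction from the outset and then finish with a short cleanup step. Fix auxiliary parameters $\eps' = \eps'(\eps)$ sufficiently small (one can take $\eps' = c\eps^2$ for a small absolute constant $c$) and $k_0' = k_0'(\eps,k_0)$ sufficiently large, and start from the initial partition obtained by splitting $R$ into $k_0'$ parts of size $\floor{n/k_0'}$ and $L$ into $k_0'$ parts of size $\floor{n/k_0'}$, placing the fewer than $k_0'$ leftover vertices on each side into $V_0$; since $|R| = |L| = n$, this initial $V_0$ already satisfies $|V_0 \cap R| = |V_0 \cap L|$. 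Now iterate the Szemer\'edi refinement: as long as the current partition has at least $\eps'\binom{m}{2}$ pairs that are $\eps'$-irregular with respect to some colour (where $m$ is the current number of non-exceptional parts), refine using the usual witness sets to destroy the irregular pairs in that colour, and then re-equitise by cutting every part into chunks of a common size and dumping the remainders into $V_0$ on the appropriate side. Running the defect-index argument with the index summed over the three colour-graphs bounds the number of refinement steps by a constant depending only on $\eps'$, so $|V_0|$ stays below $2\eps' n$ throughout and the number of non-exceptional parts stays between $k_0'$ and some $M_0 = M_0(\eps',k_0')$. Crucially, $G$ has no edges inside $R$ or inside $L$, so every pair of parts lying on the same side is trivially $\eps'$-regular in every colour and never triggers a refinement; since the initial partition respects $\{R,L\}$ and all witness sets are subsets of parts, every part produced at every stage lies entirely inside $R$ or entirely inside $L$. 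This secures property~(\ref{itm:reg-consistent-parts}).

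At this point we have an $\eps'$-regular equitable partition with all non-exceptional parts of one common size $t$, of which $a$ lie in $R$ and $b$ lie in $L$, with $k_0' \le a+b \le M_0$. Now we clean up. First, since at most $\eps'\binom{a+b}{2}$ pairs are $\eps'$-irregular in some colour, at most $\sqrt{\eps'}(a+b)$ parts have more than $\sqrt{\eps'}(a+b)$ such irregular partners; move all of those parts into $V_0$. Second, if unequal numbers of parts remain in $R$ and in $L$, move the excess parts on the larger side into $V_0$ as well, leaving exactly $k$ parts on each side, where $k$ is the smaller of the two counts. The key observation is now that, because every remaining part has the same size $t$ and each of the equal-sized sides $R$ and $L$ contains exactly $k$ of them, we automatically get $|V_0 \cap R| = n - kt = |V_0 \cap L|$, which is property~(\ref{itm:reg-balanced}), and the non-exceptional parts number exactly $2k$, with $k$ on each side. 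A routine estimate shows that each of the two moves adds at most $O(\sqrt{\eps'}(a+b)\cdot t) = O(\sqrt{\eps'}\,n)$ vertices to $V_0$, so $|V_0| \le \eps n$ once $\eps' \le c\eps^2$ and $n \ge n_0(\eps,k_0)$; and since only an $O(\sqrt{\eps'})$-fraction of the parts is discarded, so that $a+b \le 4k$ say, taking $k_0'$ large enough in terms of $k_0$ guarantees $k_0 \le 2k \le M_0 =: K_0$. Finally, every surviving part has at most $\sqrt{\eps'}(a+b) \le 4\sqrt{\eps'}\,k \le 2\eps k$ partners among the $2k$ surviving parts with which it forms a pair that is $\eps'$-irregular (hence $\eps$-irregular) in some colour, which is exactly property~(\ref{itm:reg-density}). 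Relabelling so that $V_1,\ldots,V_k \subseteq R$ and $V_{k+1},\ldots,V_{2k}\subseteq L$ completes the argument.

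The part of the argument that is not completely routine is reconciling equitability with property~(\ref{itm:reg-balanced}) and with the requirement that there be an even number of non-exceptional parts split evenly between the two sides, since the standard regularity lemma provides none of these. The resolution is the simple remark highlighted above: once all parts have a common size and the two equal-sized sides carry equally many of them, the exceptional set is forced to be balanced, so it is enough to equalise the part counts, which costs only a handful of parts dumped into $V_0$. The remaining modifications --- working with three colours simultaneously, keeping every part on one side, re-equitising with a common part size --- are cosmetic variants of the textbook proof, and the only genuine bookkeeping is checking that these dumps keep $|V_0| \le \eps n$ and the number of parts inside $[k_0,K_0]$, which is arranged by taking the initial partition fine enough and $\eps'$ small enough relative to $\eps$.
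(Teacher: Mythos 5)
The paper does not actually prove this lemma: after citing the Koml\'os--Simonovits survey, it simply remarks that ``the adaptations, while perhaps not very standard, are easy to achieve by slightly modifying the usual proof of the regularity lemma,'' and leaves it at that. Your proposal carries out precisely such a modification, and the argument is correct. You correctly identify the three things that need care and handle each of them soundly: (i) starting from a partition that respects $\{R,L\}$, every later part stays inside one side, because witness sets are subsets of parts and because any pair of parts on the same side has no edges and is therefore trivially $\eps'$-regular in every colour, so such a pair never triggers refinement; (ii) once all non-exceptional parts have a common size $t$ and you have forced equally many ($k$) of them on each side, the balance $|V_0\cap R|=n-kt=|V_0\cap L|$ falls out automatically, so it suffices to equalise the counts at the end, which costs only $O(\eps')(a+b)$ parts because $|a-b|\le |V_0|/t = O(\eps')(a+b)$; and (iii) the degree-form condition (d) is obtained by the usual Markov-type argument of discarding the at most $\sqrt{\eps'}(a+b)$ parts with too many irregular partners. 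The bookkeeping you sketch (taking $\eps'\le c\eps^2$ and $k_0'$ large) does make $|V_0|\le \eps n$, $k_0\le 2k\le K_0$, and $\sqrt{\eps'}(a+b)\le 2\eps k$ all hold, so the final partition satisfies (a)--(d). This is a faithful and complete realisation of the argument the paper gestures at, using the index-increment over the three colour graphs in the standard way; no step is missing.
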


        \begin{defn} \label{def:reduced-graph}
            Given an edge-coloured graph $G$, and a partition $\P = \{V_0, \ldots, V_k\}$, the \textit{$(\eps,d)$-reduced graph} $\Gamma$ is the graph whose vertices are $\{V_1, \ldots, V_k\}$ and $V_i V_j$ is an edge if and only if $(V_i, V_j)$ is $\eps$-regular with respect to each colour of $G$ and its density in $G$ is at least $d$. We colour each edge $V_iV_j$ with a majority colour in $G[V_i, V_j]$.
        \end{defn}

        The following lemma is used to lift a connected matching found in the reduced graph to a cycle in the original graph; it was proved by Figaj and {\L}uczak in \cite{figaj-luczak} as a concluding part of their argument for the three colour Ramsey number of even cycles; the idea of using \cms{} this way was introduced by \L{}uczak \cite{luczak99-con-match}.

	    \begin{lem}\label{lem:con-mat-to cycle}
            Given $\eps,d,k$ such that $1 > d > 20\eps > 0$ there is an $n_0$ such that the following holds. Let $\P$ be an $(\eps, k)$-equitable partition of a graph $G$ on $n \ge n_0$ vertices, and let $\Gamma$ be the corresponding $(\eps, d)$-reduced graph. Suppose that $\Gamma$ contains a monochromatic $m$-\cm{}. Then $G$ contains an even cycle of the same colour and of length $\ell$ for every even $\ell \le 2(1 - 9\eps d^{-1})m|V_1|$.
	    \end{lem}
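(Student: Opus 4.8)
The plan is to run the by-now-standard ``connected matching to cycle'' argument of \L{}uczak \cite{luczak99-con-match}, in the form used by Figaj and \L{}uczak \cite{figaj-luczak}. Fix the monochromatic $m$-\cm{} in $\Gamma$ and assume, without loss of generality, that it is red: say it is $\{V_{a_1}V_{b_1},\dots,V_{a_m}V_{b_m}\}$, all lying in a single red component $\mathcal{C}$ of $\Gamma$. The first step is to record the needed structure combinatorially inside $\mathcal{C}$: cyclically order the matching edges and, for each $i$ (indices modulo $m$), fix a path $Q_i$ in $\mathcal{C}$ from $V_{b_i}$ to $V_{a_{i+1}}$; splicing these with the matching edges produces a closed walk $W$ in $\mathcal{C}$ that traverses each matching edge exactly once and has bounded length (at most $k^2$, since $\Gamma$ has at most $k$ vertices). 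By \Cref{def:reduced-graph}, every edge of $W$ is an $\eps$-regular pair whose density in $G$ is at least $d$, hence whose red density is at least $d/3$.

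Next I would lift $W$ to a long red cycle in $G$; write $N = |V_1|$ for the common cluster size. The $2m$ matching clusters $V_{a_i}, V_{b_i}$ are pairwise distinct, but the connectors $Q_i$ may revisit clusters. Since $|W|$ is bounded, each cluster is visited only boundedly often, so inside every cluster I reserve a bounded-size set of vertices --- one slot per visit of $W$ --- to be used while traversing the $Q_i$'s, leaving all but a vanishing fraction of each matching cluster free for the ``bulk'' of the cycle. Then I process $W$: for each matching edge $V_{a_i} V_{b_i}$ I build a red path inside $V_{a_i} \cup V_{b_i}$ that alternates between the two clusters, starts and ends at prescribed vertices of large red degree, and covers all but an $O(\eps/d)$-fraction of the $2N$ vertices. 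This rests on the standard fact that, after deleting the at most $\eps N$ vertices of small red degree on each side, an $\eps$-regular pair of red density at least $d/3$ becomes super-regular and therefore contains a near-spanning path whose endpoints can essentially be prescribed. The short connectors $Q_i$ are realised with two or three reserved vertices per cluster, the required edges between consecutive clusters again being furnished by regularity. Concatenating all of these pieces along $W$ gives a closed red walk in $G$ whose vertices are all distinct by construction --- that is, a red cycle --- of length $2(1 - O(\eps/d))mN$, and tracking constants yields the claimed $2(1 - 9\eps d^{-1})mN$.

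To get a red cycle of \emph{every} even length $\ell$ in the stated range, not only the longest one, I would grow the bulk path through the matching edges of $W$ one edge at a time: after passing through the first $j$ matching edges and the intervening connectors I can at any point route back to the starting cluster through $\mathcal{C}$ (possible since we never leave the connected component) and close the cycle. Using one more or one fewer matching edge changes the attainable length by at most about $2N$, while inside a single regular pair one can break off the alternating path after any prescribed number of steps; combining the coarse choice (how many full matching edges) with the fine adjustment (where to stop in the current pair), and using the slack to fix parity, realises every even $\ell$ up to the maximum.

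The conceptually routine but genuinely fiddly part is the middle step: the bulk paths through the matching pairs and the connector paths have to be embedded simultaneously and kept pairwise vertex-disjoint in spite of cluster reuse, while at every stage enough unused high-degree vertices must remain both to extend the current path and to make the next connection. This is precisely where the hypotheses $1 > d > 20\eps$ and $n \ge n_0$ enter and where the constant $9$ is pinned down; the argument is not deep, but it is the part that requires care.
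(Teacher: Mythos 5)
The paper does not give a proof of this lemma: it explicitly defers to Figaj and \L{}uczak \cite{figaj-luczak} (and to \L{}uczak \cite{luczak99-con-match} for the underlying idea), so there is no in-paper proof to compare against. Your sketch correctly reconstructs the standard argument from those references --- a bounded-length closed walk through the monochromatic component of the reduced graph, a bounded reservoir of connector vertices per cluster, long alternating paths inside each regular matching pair, and coarse-plus-fine length adjustment to realise every even $\ell$ --- and your observation that the majority colour has density at least $d/3$ correctly accounts for the $9\eps d^{-1}$ constant.
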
	

    \subsection{Main result}
    
        It is now easy to complete the proof of \Cref{thm:main}.
        \begin{proof}[ of \Cref{thm:main}]
            Let $\mu > 0$, let $N = (3 + \mu)n$ and suppose that $n$ is large. Our goal is to show that every $3$-colouring of $K_{N,N}$ contains a monochromatic cycle of length $2n$. Let $\eps > 0$ be sufficiently small. Apply the regularity lemma (\Cref{lem:mult-col-reg}) to the graph $G$ with parameter $\eps$ and let $\P$ be a partition that satisfies the conditions of the lemma. Consider the corresponding $(\eps, 1)$-reduced graph $\Gamma$. Note that by (\ref{itm:reg-consistent-parts}) and (\ref{itm:reg-balanced}) in \Cref{lem:mult-col-reg}, $\Gamma$ is a balanced bipartite graph; denote the number of vertices in each side by $k$ so that $\P = \{V_0, \ldots, V_{2k}\}$. Furthermore, every pair $(V_i, V_j)$, where $V_i, V_j$ are parts of $\P$ in opposite sides of the bipartition, has density $1$ in the original graph. Hence, whenever such $(V_i, V_j)$ is $\eps$-regular with respect to each colour, $V_iV_j$ is an edge in $\Gamma$. It follows from (\ref{itm:reg-density}) that $\Gamma$ has minimum degree at least $(1 - 2\eps)k$.
            
            We would like now to apply \Cref{thm:three-col-symm-cm-almost-complete} to find a large monochromatic \cm{} in $\Gamma$. Let $n' = \frac{k}{3 + 8^7 \eps} \ge k/4$ (the inequality holds because $\eps$ is sufficiently small). As every vertex in one side of $\Gamma$ has at most $2\eps k \le 8\eps n'$ non-neighbours in the other side, \Cref{thm:three-col-symm-cm-almost-complete} implies that $\Gamma$ contains a monochromatic \cm{} of size $n'$. By \Cref{lem:con-mat-to cycle}, $G$ contains a monochromatic cycle of length $\ell$ for any even $\ell \le 2(1 - 9\eps)n'|V_1|$. Note that 
            \begin{align*}
                2(1 - 9\eps)n'|V_1|  
                & = 2(1 - 9\eps)\cdot \frac{k}{3 + 8^7 \eps} \cdot |V_1| \\
                & \ge 2(1 - 9\eps )(1 - \eps) \cdot \frac{N}{3 + 8^7 \eps} \\
                & = 2(1 - 9\eps)(1 - \eps) \cdot \frac{3 + \mu}{3 + 8^7 \eps} \cdot n \ge 2n.
            \end{align*}
            Where the first inequality follows as $k|V_1|=N-|V_0|/2 \ge(1-\eps)N$ and the last inequality holds for $\eps$ sufficiently small compared to $\mu.$ Hence, there is a monochromatic cycle of length $2n$, as required.
        \end{proof}

\section{Concluding remarks and open problems} \label{sec:conc-remarks}

    In this paper we asymptotically determine the $3$-colour bipartite Ramsey number of even cycles and consequently for paths. The most natural next question is to determine what happens for four colours or more, especially as these cases are not even known in the ordinary Ramsey setting and our methods do show some promise.

    Another interesting direction is to try extending our result to hold exactly for large enough cycles or paths, similarly to \cite{kohayakawa-simonovits-skokan,ramsey3path,benevides-skokan}, probably using stability. This raises the question of showing a stability version of our result. One issue with showing this is the following class of examples.
    
    Given $N=3k-3,$ split the vertices of $L(K_{N,N})$ into sets $A_1,A_2,A_3$ such that $|A_i|=k-1$ and split the vertices of $R(K_{N,N})$ into sets $B_1,B_2,B_3$ such that $|B_3| \le k-1.$ Colour all $A_1-B_1$ and $A_2-B_2$ edges red, colour $A_1-B_2$ and $A_2-B_1$ edges blue and colour $A_3-(B_1 \cup B_2)$ and $B_3-(A_1 \cup A_2)$ edges green, and, finally, assign red or blue colours to any edge between $A_3,B_3$ arbitrarily (see Figure \ref{fig:eg3}). This example has no monochromatic $k$-\cm{}, so consequently no monochromatic cycle of length $2k$ or a path of length $2k-1$. It demonstrates that there can be a major proportion of the graph with rather arbitrary assignment of colours.
    
    \begin{figure}[ht]
        \caption{Class of examples for the symmetric case.}
        \includegraphics[scale=.8]{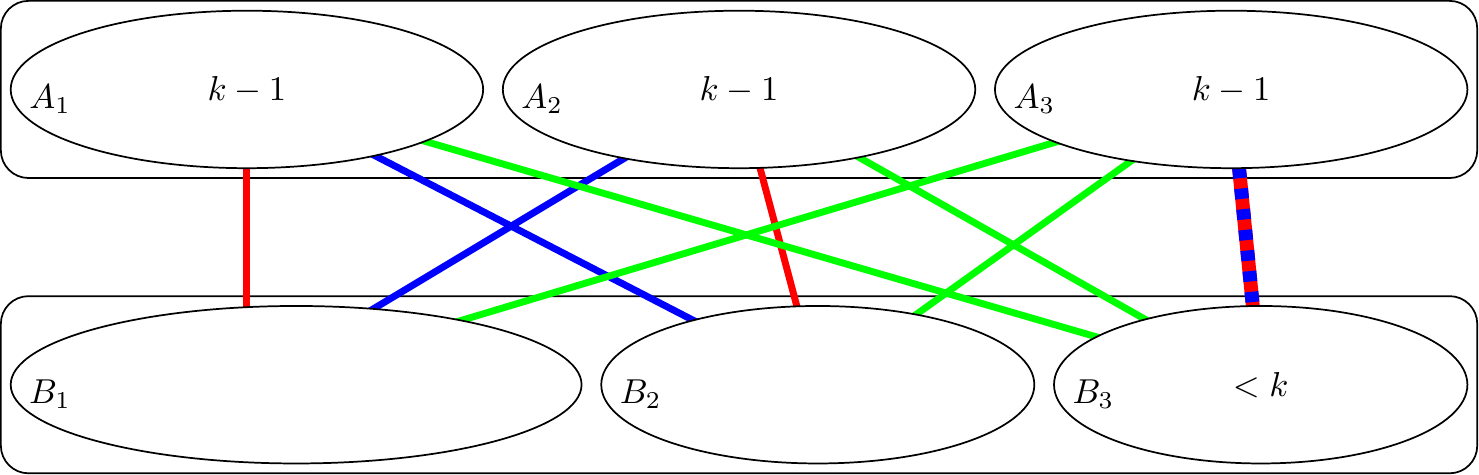}
        \label{fig:eg3}
	\end{figure}

    Another natural direction concerns asymmetric $3$-colour bipartite Ramsey numbers. Note that as Theorem \ref{thm:three-col-symm-cm-almost-complete} and the arguments in \Cref{sec:regularity}, based on regularity lemma, both easily extend to the asymmetric case, the main difficulty is in solving the corresponding asymmetric problem for connected matchings. Recall that $r(k,l,m)$ is the minimum $n$ such that in every $3$-colouring of $K_{n,n}$ there is a red $k$-\cm, a blue $l$-\cm{} or a green $m$-\cm. In \Cref{thm:k-k-l-all}, we determined the value of $r(k,l,l)$ for every $k$ and $l$. Interestingly, \Cref{cor:asym-k-k-<-l-cm} shows that the extremal examples have very different behaviours, depending on the values of $k$ and $l$. This suggests that the behaviour of the $r(k,l,m)$, where $k, l, m$ are allowed to differ, might be very interesting in its own right. 

    Finally we note that our methods above can be used to prove $r_2^{\text{bip}}(C_{2n},C_{2m})=(1+o(1))(n+m),$ for all $n,m$, hence resolving, asymptotically, the conjecture of Zhang, Sun and Wu \cite{zhang2}. It would be interesting to resolve it exactly.
    
    \subsection*{Acknowledgements}
        We would like to thank the referees for their helpful comments; in particular, we are thankful to the referee who suggested a way to simplify the proof of \Cref{thm:three-col-symm-cm-almost-complete}.

    \providecommand{\bysame}{\leavevmode\hbox to3em{\hrulefill}\thinspace}
\providecommand{\MR}{\relax\ifhmode\unskip\space\fi MR }
% \MRhref is called by the amsart/book/proc definition of \MR.
\providecommand{\MRhref}[2]{%
  \href{http://www.ams.org/mathscinet-getitem?mr=#1}{#2}
}
\providecommand{\href}[2]{#2}

\end{document}